\documentclass[reqno]{amsart}

\usepackage{graphicx} \usepackage{mathtools}
\usepackage{calrsfs} \usepackage{microtype} \usepackage{enumitem}
\usepackage{mathrsfs}
\usepackage[utf8]{inputenc}
\usepackage[T1]{fontenc}
\usepackage{textcomp} 

\usepackage{amsmath}
\usepackage{amsfonts} \usepackage{amsmath} \usepackage{amsrefs}
\usepackage{amssymb} \usepackage{amstext} \usepackage{amsthm}

\usepackage{cancel} \usepackage{caption} \usepackage{enumitem}
\usepackage{hyperref} \usepackage{cleveref} \usepackage{subcaption}
\usepackage{xcolor}
\usepackage[english]{babel}
\usepackage{comment}

\theoremstyle{plain}

\usepackage{blkarray}
\usepackage{multirow}
\usepackage{mathtools}
\usepackage{graphicx}

\newtheorem{theorem}{Theorem}[section]

\newtheorem{proposition}[theorem]{Proposition}

\theoremstyle{definition} 

\theoremstyle{remark} \newtheorem{remark}[theorem]{Remark}

\newtheorem{prop}[theorem]{Proposition}

  \def\cC{\mathcal C}
 
 \newcommand{\cM}{\mathcal M}

\def\cH{\mathcal H}  
 
 \newcommand{\PG}{\mathrm{PG}}

\newcommand{\GF}[1]{\mathbb{F}_{#1}}

\title{Quasi-Hermitian varieties and their Barlotti-Cofman Representation}
\author{Angela Aguglia \and Viola Siconolfi}
\begin{document}

\maketitle

\begin{abstract}
Quasi-Hermitian varieties arise as higher-dimensional generalizations of non-classical unitals, including the Buekenhout--Metz (BM) and Buekenhout--Tits (BT) families. After reviewing known constructions and structural properties, we determine explicitly the Barlotti--Cofman (BC) representation of BM and BT quasi-Hermitian varieties in $\PG(3,q^2)$ inside $\PG(6,q)$. 

We show that BM quasi-Hermitian varieties correspond to quadratic cones with hyperbolic base, whereas in the BT case the associated hypersurfaces admit a cone-like description and are non-quadratic. We also describe the configuration of spread elements in the section at infinity.

These results provide a geometric interpretation of the non-classical nature of BM and BT varieties within the BC framework.

\end{abstract}
\smallskip
\noindent
\textit{Keywords:} Quasi-Hermitian varieties; Barlotti--Cofman representation; finite projective spaces; Desarguesian spreads; Buekenhout--Metz and Buekenhout--Tits quasi-Hermitian  varieties.

\smallskip

\noindent
\textit{MSC Classification:} 51E20, 51A05, 94B05

\keywords{}
\maketitle

\section{Introduction}

Unitals in finite projective planes of square order have been widely studied for
their rich combinatorial structure and their connections with incidence geometry
and coding theory; see, for instance, \cite{E2}, \cite{BarE},  \cite{JH}, \cite{HKT}.
A {\em unital} embedded in the Desarguesian plane $\PG(2,q^2)$ is a set of $q^3+1$ points such that every line meets it in either $1$ or $q+1$ points.
In  $\PG(2,q^2)$, the classical example is the Hermitian
unital arising from a non-degenerate Hermitian curve (cf.\ Segre \cite{Segre1965}).
Beyond the classical case, the main known infinite families of embedded
 unitals in $\PG(2,q^2)$ are the Buekenhout--Metz (BM) unitals
\cite{BE,E} (including Hermitian unitals) and, in even characteristic, the Buekenhout--Tits (BT) unitals
(see \cite{BarE}), both obtained via  the André/Bruck–Bose  model of $\PG(2,q^2)$ in $\PG(4,q)$.

A natural problem is to lift these plane constructions to higher-dimensional
spaces while retaining the characteristic two-intersection pattern with
hyperplanes (see Section~\ref{sec:QPspaces}). Quasi-Hermitian varieties provide a
framework for such an extension. These are point sets in $\PG(r,q^2)$ ($r\ge 2$)
having the same hyperplane intersection numbers as a non-singular Hermitian
variety, while allowing genuinely non-classical behaviour.
Since it has only two possible intersection sizes with hyperplanes, a quasi-Hermitian variety in $\mathrm{PG}(r,q^2)$ is a \emph{two-character set}.

Two-character sets have wide-ranging applications as
    they give rise to strongly regular graphs and
    two weight linear codes (see Delsarte \cite{D}, Calderbank and Kantor \cite{CK}).

Among quasi-Hermitian varieties, BM and BT quasi-Hermitian varieties can be regarded as
higher-dimensional analogues of the BM and BT unitals, since for $r=2$ they
reduce to these familiar non-Hermitian unitals.

 This paper has two main goals. First, we provide a  survey on
quasi-Hermitian varieties, reviewing their definition, known constructions,
and recent developments. Second, we obtain new results on the
Barlotti--Cofman (BC) representation of BM and BT quasi-Hermitian varieties.

Sections~\ref{sec:QPspaces} and~\ref{sec:QHvarieties} summarize background on
quasi-polar spaces, classical and non-classical constructions, and related
recent work. A key tool throughout is the Barlotti--Cofman representation of
$\PG(r,q^2)$ inside $\PG(2r,q)$ via a Desarguesian spread.
Section~\ref{sec:BCrep} recalls this representation and fixes our notation.
Working over $\mathbb{F}_q$ allows one to reinterpret varieties defined over
$\mathbb{F}_{q^2}$ as configurations in a larger projective space, where the
interaction with the hyperplane carrying the spread encodes the structure at
infinity.

Sections~\ref{sec:odd},~\ref{sec:even} and~\ref{sec:BT} contain the main new
results. There we determine explicitly the BC representation of BM and BT
quasi-Hermitian varieties in $\PG(3,q^2)$, viewed inside $\PG(6,q)$.

For BM quasi-Hermitian varieties, we show that their BC representation is the
affine part of a quadratic cone whose base is a non-degenerate hyperbolic quadric.
We describe the section at infinity and determine precisely the spread elements
contained in it; see Theorems~\ref{thm:babodd} and~\ref{thm:mabodd} (odd
characteristic) and Theorems~\ref{thm:babeven} and~\ref{thm:mabeven} (even
characteristic).

In contrast, BT quasi-Hermitian varieties exhibit a different behaviour: their BC representation yields a non-quadratic hypersurface in $\PG(6,q)$ admitting a cone-like description.
We explicitly determine the associated configuration of spread elements in the
section at infinity; see Theorems~\ref{thm:BT-quasiHermitian-cone}
and~\ref{thm:BTH}.

Beyond providing a coordinate model, our results reveal a structural
dichotomy in dimension three: BM varieties give rise to quadratic cones, whereas BT varieties lead to non-quadratic hypersurfaces which can be described as unions of lines through a fixed point. Moreover, the induced configuration
of spread elements at infinity encodes subtle arithmetic information (such as
$q\bmod 4$ in the BM case and $e\bmod 4$ in the BT case), suggesting that the BC
model may offer intrinsic $\mathbb{F}_q$-linear signatures for distinguishing,and
possibly classifying, non-classical quasi-Hermitian varieties.

The restriction to $r=3$ is motivated by the fact that already in this dimension
the interaction between the affine cone and the Desarguesian spread exhibits
non-trivial behaviour. Extending the analysis to higher dimensions raises further
combinatorial and algebraic complexities, which we leave for future work.

\section{Quasi-Polar Spaces and Historical Background}\label{sec:QPspaces}

A \emph{quasi-polar space} is a set of points $\mathcal{Q}$ in $\mathrm{PG}(r,q)$, where $r \ge 2$ and $q$ is a prime power, such that the intersection sizes with hyperplanes match those of a non-degenerate classical polar space $\mathscr{P}$ embedded in $\mathrm{PG}(r,q)$. 

The concept of quasi-polar spaces generalizes classical polar spaces by relaxing algebraic conditions while preserving combinatorial intersection properties. This idea traces back to Segre (1954), who defined an oval in a finite projective plane as a combinatorial abstraction of a conic in $\mathrm{PG}(2,q)$.

Quasi-quadrics were introduced by De Clerck, Hamilton, O'Keefe, and Penttila in \cite{DHOP}, whereas the analogous concept for Hermitian varieties, \emph{quasi-Hermitian varieties}, was formally introduced by De Winter and Schillewaert in \cite{DS}.

Here we focus on the Hermitian case.
A \emph{non-singular Hermitian variety} $\mathcal{H}(r,q^2)$ in $\mathrm{PG}(r,q^2)$ is defined as the set of absolute points of a non-degenerate unitary polarity $\rho$:
\[
\mathcal{H}(r,q^2) = \{ P \in \mathrm{PG}(r,q^2) \mid P \in P^\rho \},
\]
where $P^\rho$ denotes the polar hyperplane of $P$ under $\rho$.

Fixing a projective frame of $\mathrm{PG}(r,q^2)$ where $(X_0,X_1,\ldots,X_r)$ denote
 homogeneous coordinates for projective points, a non-singular Hermitian variety $\mathcal{H}(r,q^2)$ is a hypersurface with equation
\[
(X_0^q,\ldots,X_r^q) \, H \, (X_0,\ldots,X_r)^T = 0,
\]
where $H$ is a non-singular Hermitian $(r+1)\times(r+1)$ matrix.

Any non-singular Hermitian variety in $\mathrm{PG}(r,q^2)$ can be mapped to any other via a projectivity. In the plane, the non-singular Hermitian curve $\mathcal{H}(2,q^2)$ is known as the \emph{classical or  Hermitian  unital}. 

A point set $\mathcal{S}$ of $\mathrm{PG}(r,q^2)$ is a \emph{quasi-Hermitian variety} if it meets each hyperplane in one of the following two sizes:
\[
|\mathcal{H}(r-1,q^2)| = \frac{(q^r+(-1)^{r-1})(q^{r-1}-(-1)^{r-1})}{q^2-1}, \quad \text{or}
\]
\[
|P_0\mathcal{H}(r-2,q^2)| = \frac{(q^r+(-1)^{r-1})(q^{r-1}-(-1)^{r-1})}{q^2-1} + (-1)^{r-1}q^{r-1}.
\]

The non-singular Hermitian variety $\mathcal{H}(r,q^2)$ is itself a quasi-Hermitian variety, called the \emph{classical quasi-Hermitian variety}, but non classical examples can occur.

In the next section we review constructions that produce non-classical quasi-Hermitian varieties, including higher-dimensional analogues of the Buekenhout--Metz and Buekenhout--Tits unitals, which to date are the only known families of non classical unitals embedded in the Desarguesian plane.

\section{Quasi-Hermitian Varieties and Known Constructions}\label{sec:QHvarieties}

In this section we recall the main constructions and structural
results that will be relevant for the BC analysis carried out
in Sections~5--7. In particular, we focus on the BM and BT
families and on their known equivalence and automorphism properties.

The first nontrivial constructions of quasi-Hermitian varieties were introduced by De Winter and Schillewaert \cite{DS}
using  the \emph{pivoting method} (further developed in \cite{SchillewaertVandeVoorde2022}): a powerful tool for constructing quasi-Hermitian varieties from classical Hermitian varieties. The idea is to start with a non-degenerate Hermitian variety $\cH \subset \mathrm{PG}(r,q^2)$ and modify the configuration of its tangent hyperplanes while preserving the two-character property.

More precisely, one selects a set of tangent hyperplanes to $\cH$ and replaces each of them by another hyperplane through the same pivot point, ensuring that the intersection pattern with the remaining points is adjusted in a controlled way. This operation, called \emph{pivoting}, produces a new point set with the same intersection numbers with hyperplanes as $\cH$, but which is not projectively equivalent to $\cH$.

The pivoting technique can be applied iteratively (\emph{multi-pivoting}), generating infinite families of quasi-Hermitian varieties.

Beyond pivoting, more global approaches have been developed. Aguglia, Cossidente, and Korchmáros \cite{ACK,AA}, introduced families known as \emph{BM} (Buekenhout--Metz) and \emph{BT} (Buekenhout--Tits) quasi-Hermitian varieties, obtained by deforming Hermitian varieties through birational transformations and non-standard incidence models. These constructions generalize the  known non-Hermitian unitals in \(\mathrm{PG}(2,q^2)\) to higher dimensions, providing rich examples with large automorphism groups (see Subsection  \ref{Var1}).

In \cite{PF}, a geometric method to construct two-character sets in $\mathrm{PG}(r,q^2)$, which includes quasi-Hermitian varieties as a special case was introduced. The construction works for $r \ge 3$ and $q$ both odd. 

The approach starts from a Baer subgeometry $\Sigma \cong \mathrm{PG}(r,q)$ embedded in $\mathrm{PG}(r,q^2)$ and a non-degenerate quadric $Q_0$ contained in $\Sigma$. The key idea is to consider the set of all extended lines of $\mathrm{PG}(r,q^2)$ that meet $\Sigma$ in $q+1$ points and intersect $Q_0$ in either one point or $q+1$ points. The union of these lines forms a point set $B$ with the same intersection numbers with hyperplanes as a non-degenerate Hermitian variety, but which is not projectively equivalent to it. This makes $B$ a non classical quasi-Hermitian variety.

Recent constructions of quasi-Hermitian varieties in $\PG(3,q^2)$
have been obtained via geometric methods.
Lavrauw, Lia and Pavese \cite{LLP} constructed new examples
arising from the geometry of the Hermitian Veronese curve,
exploiting the action of subgroups of $PGL(4,q^2)$
that preserve both a Hermitian surface and a hyperbolic quadric.
These constructions yield non classical quasi-Hermitian surfaces.

Further examples were introduced by Lia and Sheekey  \cite{LS}
using tensorial constructions and commuting polarities.
Their approach produces additional families of two-character sets
in $\PG(3,q^2)$, again not projectively equivalent to previously
known examples.
We refer to the cited papers for full details.

Among these examples, the Buekenhout--Metz (BM)
and Buekenhout--Tits (BT) families play a central role,
and will be the focus of our BC analysis.

\subsection{ BM  and BT  quasi-Hermitian varieties}
\label{Var1}

We now present two fundamental families of quasi-Hermitian varieties in finite projective spaces of arbitrary dimension, generalizing the well-known non-classical unitals in \(\mathrm{PG}(2,q^2)\).

\subsubsection{  BM quasi-Hermitian varieties}
\label{ssec:BM}

In \cite{ACK} the authors introduced a nonstandard model $\Pi$ of $\mathrm{PG}(r,q^2)$ to construct new quasi-Hermitian varieties. This approach globally modifies the incidence structure of $\mathrm{PG}(r,q^2)$ via a quadratic transformation, rather than by local pivoting.

 $\Pi$ is constructed as follows. Fix a projective frame with homogeneous coordinates $(X_0,X_1,\dots,X_r)$ and consider the affine space $\mathrm{AG}(r,q^2)$ with infinite hyperplane
$
\Sigma_{\infty}: X_0 = 0.
$
Then, $AG(r,q^2)$ has affine coordinates $x_1, x_2, \dots, x_r$ where
$x_i = X_i / X_0$ for $i \in \{1,\dots,r\}$.
Let
$P_{\infty} = (0,0,\dots,0,1)$.
 Consider the incidence structure $\mathcal{I}$   whose
points are the points of $AG(r,q^2)$ and whose hyperplanes are the hyperplanes
through  $P_\infty = (0,0,\dots,0,1)$ together with the
 quadrics of the form
    \[
    Q_a(m,d):\; x_r = a(x_1^2+\cdots+x_{r-1}^2)+m_1x_1+\cdots+m_{r-1}x_{r-1}+d,
    \]
    where $a\in\mathbb{F}_{q^2}^*$, $m\in\mathbb{F}_{q^2}^{r-1}$, $d\in\mathbb{F}_{q^2}$.

The birational map
\[
\varphi:\;(x_1,\dots,x_{r-1},x_r)\mapsto (x_1,\dots,x_{r-1},x_r - a(x_1^2+\cdots+x_{r-1}^2)).
\]
induces an isomorphism between  $AG(r,q^2)$ and $\mathcal{I}$. Completing $\mathcal{I}$ with its points at infinity in the usual way gives a projective space $\Pi$ isomorphic to $\PG(r, q^2)$.

Under this model $\Pi$, the non-singular Hermitian variety $\cH(r,q^2)$ with affine equation
\[
x_r^q - x_r = (b^q-b)(x_1^{q+1}+\dots+x_{r-1}^{q+1}), \qquad b\in \mathbb{F}_{q^2}\setminus \mathbb{F}_q,
\]
becomes a quasi-Hermitian variety, that is it has the same intersection numbers with the hyperplanes of the new model as a classical Hermitian variety,
 provided the following conditions on $a,b$ hold:

\begin{equation}\label{cond:BM}
\begin{cases}
\text{For $q$ odd:}\\
\quad \text{if $r$ odd: } 4a^{q+1}+(b^q-b)^2 \neq 0,\\
\quad \text{if $r$ even: } 4a^{q+1}+(b^q-b)^2 \text{ is a non-square in }\mathbb{F}_q;\\[6pt]
\text{For $q$ even:}\\
\quad \text{if $r$ odd: no conditions,}\\
\quad \text{if $r$ even: } \mathrm{Tr}\bigl(a^{q+1}/(b^q+b)^2\bigr)=0,
\end{cases}
\end{equation}
where $\mathrm{Tr}$ denotes the trace of $\mathbb{F}_{q^2} \to \mathbb{F}_{q}$.

\noindent
We define $\mathcal{F}$ to be the cone  of $\Sigma_{\infty}$ satisfying:
\begin{equation}\label{eq:F}
\begin{cases}
X_0=0\\
X_1^{q+1}+X_2^{q+1}+\ldots+ X_{r-1}^{q+1} =0
\end{cases}
\end{equation}
 and 
$\mathcal{B}_{a,b}$ the variety with affine equation 
\[
x_r^q - x_r + a^q(x_1^{2q}+\dots+x_{r-1}^{2q}) - a(x_1^2+\dots+x_{r-1}^2) = (b^q-b)(x_1^{q+1}+\dots+x_{r-1}^{q+1}).
\]
As $\mathcal{B}_{a,b}$ and $\mathcal{H}(r,q^2)$ are birationally equivalent under $\varphi$, it turns out that the pointset  $\mathcal{M}_{a,b}:=(\mathcal{B}_{ab}\setminus \Sigma_{\infty}) \cup \mathcal{F}$ is a non-classical quasi-Hermitian variety of $\PG(r,q^2)$ under  conditions~\eqref{cond:BM}.

For $r=2$, $\mathcal{M}_{a,b}$ is a non-classical Buekenhout–Metz unital, that is,  not projectively equivalent to the Hermitian curve.

\subsubsection{  BT quasi-Hermitian varieties}\label{sssec:BT}

For $q = 2^e$ with $e>1$ odd, Aguglia~\cite{AA} introduced a second non-standard model of $\mathrm{PG}(r,q^2)$ leading to a new family of quasi-Hermitian varieties. This construction uses a birational transformation and an automorphism of $\mathbb{F}_q$ to globally modify the incidence structure as follows.

Choose $\varepsilon \in \mathbb{F}_{q^2}\setminus \mathbb{F}_q$ such that
\[
\varepsilon^2 + \varepsilon + \delta = 0, \qquad \delta \in \mathbb{F}_q,\ \operatorname{tr}(\delta)=1,
\]
where $\operatorname{tr}$ denotes the trace map $\mathbb{F}_q \to \mathbb{F}_2$.
Consider the automorphism
\[
\sigma : x \mapsto x^{2^{\frac{e+1}{2}}}.
\]

For any vector $m = (m_1, \ldots, m_{r-1}, d) \in \mathrm{GF}(q^{2})^{\,r}$,
let $D(m)$ denote the algebraic hypersurface defined by
\begin{equation}\label{eq:2.1}
x_r =
\Delta_{\varepsilon}(x_1)
+ \cdots
+ \Delta_{\varepsilon}(x_{r-1})
+ m_1 x_1
+ \cdots
+ m_{r-1} x_{r-1}
+ d.
\end{equation}
where 
\[
\Delta_{\varepsilon}(x)
= \varepsilon x^{q(\sigma+2)}
  + \left( \varepsilon^{\sigma} + \varepsilon^{\sigma+2} \right) x^{q\sigma + 2}
  + x^{\sigma}
  + (1 + \varepsilon)x^{2}.
\]

Define $\mathcal{I}'$ as the incidence structure whose points are those of $\mathrm{AG}(r,q^2)$ and whose hyperplanes are the hyperplanes through $P_\infty = (0,0,\dots,0,1)$ together with the hypersurfaces $D(m)$, where $m$ ranges over $\mathbb{F}_{q^2}^r$.
The incidence structure $\mathcal{I}'$ is an affine space isomorphic to $\mathrm{AG}(r,q^2)$.

The birational transformation $\phi$ given by
\begin{equation}
\phi : (x_1,\dots,x_{r-1},x_r) \mapsto (x_1,\dots,x_{r-1},x_r + \Delta_{\varepsilon}(x_1)+\dots+\Delta_{\varepsilon}(x_{r-1}))
\tag{2.2}
\end{equation}
determines an isomorphism $\mathcal{I}' \cong \mathrm{AG}(r,q^2)$.
Completing $\mathcal{I}'$ with its points at infinity in the usual way gives a projective space $\Pi'$ isomorphic to $\mathrm{PG}(r,q^2)$.

\medskip
In this model $\Pi'$, the non-singular Hermitian variety $\mathcal{H}(r,q^2)$ with affine equation
\[
x_r^q - x_r = x_1^{q+1}+\dots+x_{r-1}^{q+1}, \qquad b\in \mathbb{F}_{q^2}\setminus \mathbb{F}_q,
\]
is a quasi-Hermitian variety.

As a consequence, for $x \in \mathbb{F}_{q^2}$, set
\[
\Gamma_{\varepsilon}(x) = [x+(x^q+x)\varepsilon]^{\sigma+2}+(x^q+x)^{\sigma}+(x^{2q}+x^2)\varepsilon+x^{q+1}+x^2,
\]
and define the  variety in $\mathrm{PG}(r,q^2)$ with affine equation:
\begin{equation}\label{eq:BT1}
\mathcal{V}^r_{\varepsilon}:\quad x_r^q + x_r = \Gamma_{\varepsilon}(x_1)+\dots+\Gamma_{\varepsilon}(x_{r-1}).
\end{equation}

Then, the point set
\[
\mathcal{H}^r_{\varepsilon} = (\mathcal{V}^r_{\varepsilon}\setminus \Sigma_{\infty}) \cup \mathcal{F},
\]
where, as in \eqref{eq:F},
\[
\mathcal{F} = \{(0,X_1,\dots,X_r)\mid X_1^{q+1}+\dots+X_{r-1}^{q+1}=0\},
\]
is a non classical quasi-Hermitian variety of $\mathrm{PG}(r,q^2)$. It is called a \emph{BT quasi-Hermitian variety} since for $r=2$, $\mathcal{H}^2_{\varepsilon}$ coincides with the Buekenhout--Tits unital.
\medskip
\subsubsection{The isomorphism issue}

The parametric nature of BM quasi-Hermitian varieties naturally
leads to equivalence questions.

In the planar case ($r=2$), the number of projectively
inequivalent BM unitals depends on the arithmetic of $q$.

\begin{theorem}[\cite{BE,E}]\label{thm:BM-unitals}
Let \(q=p^n\ge 4\) be a prime power. The number of projectively
inequivalent BM unitals in \(\PG(2,q^2)\) is
\[
\frac{1}{2n}\bigg[n_0+\sum_{k\mid n}\Phi\!\left(\frac{2n}{k}\right)p^k \bigg],
\]
where \(\Phi\) is Euler's totient function and \(n_0\) is the odd part of \(n\) if \(p>2\), while \(n_0=0\) if \(p=2\).
\end{theorem}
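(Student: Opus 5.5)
The count follows from a Burnside argument once the projective equivalence classes of BM unitals are put in bijection with the orbits of an explicit group acting on the parameters $(a,b)$. I would rely on the known structure of BM unitals. Every BM unital in $\PG(2,q^2)$ can be normalised, after a projectivity fixing $P_\infty$ and the tangent line $\ell_\infty$, to the form $\mathcal{M}_{a,b}$ with $r=2$: the affine points satisfy $y^q-y+a^qx^{2q}-ax^2=(b^q-b)x^{q+1}$, and at infinity we add $P_\infty$. The parameters $(a,b)$ are subject to \eqref{cond:BM} with $r$ even.

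The first step is to reduce the parameter set. I would use the stabiliser of $P_\infty$ and $\ell_\infty$ in $P\Gamma L(3,q^2)$, which acts on the family. The affine maps $x\mapsto \lambda x$ and $y\mapsto \mu y+\nu x+c$, with $\mu\in\mathbb F_q^*$ after renormalising, together with the Frobenius automorphisms, act on $(a,b)$. Using these, I would show that:
\begin{itemize}
\item $b$ can be shifted by elements of $\mathbb F_q$;
\item $a$ can be scaled by $q+1$-st power type factors;
\item the invariant of $\mathcal M_{a,b}$ is essentially the class of $u=a^{q+1}/(b^q-b)^2$, or a suitable normalised element.
\end{itemize}
Concretely, I expect the BM unitals to correspond to pairs $(\alpha,\beta)$ with $\alpha\in\mathbb F_{q^2}^*$ modulo a small group, with $\beta$ fixed in a normal form. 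Equivalently, they correspond to elements $t\in\mathbb F_q$ satisfying the non-square condition (for $q$ odd) or the trace condition (for $q$ even), excluding the Hermitian case $a=0$.

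The second step is to prove that two normalised unitals are projectively equivalent if and only if their parameters lie in the same orbit of a group $G$ of order $2n$. This group is generated by the Frobenius $x\mapsto x^p$, which gives the cyclic part of order $n$ or $2n$ acting on $\mathbb F_{q^2}$, together with an involution such as $a\mapsto -a$ or conjugation. The \emph{if} direction is direct substitution. The \emph{only if} direction requires knowing that any collineation between two non-classical BM unitals fixes $P_\infty$. This holds because $P_\infty$ is characterised as the unique point fixed by the full automorphism group, or as the unique point through which all lines meet the unital in an ovoidal cone structure. From that fact the collineation lies in the affine stabiliser computed in the first step. I expect this to be the main obstacle. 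It requires the determination of the automorphism group of non-classical BM unitals, and I would invoke the known result that it fixes $P_\infty$ and contains the order-$q^3(q-1)$ type subgroup, rather than reprove it.

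Finally, I would count orbits by Burnside's lemma. The parameter set is identified with roughly $p^n$ elements: an $\mathbb F_q$-coset structure modulo scalars, i.e.\ elements of $\mathbb F_q$ or a set in bijection with it, minus the Hermitian class. The group is cyclic of order $2n$, generated by an element $g$ of order $2n$ whose $k$-th power acts like $x\mapsto x^{p^k}$, possibly composed with the involution. Then:
\begin{itemize}
\item The number of fixed points of $g^j$ is $p^{\gcd(j,n)}$ when the involution part is trivial. Grouping by $k=\gcd$ gives the terms $\Phi(2n/k)p^k$.
\item The elements with a nontrivial involution part fix points only for $p$ odd, when $-1$ acts on the relevant set. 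These contribute $n_0$, the odd part of $n$, and contribute $0$ when $p=2$, since then $-1=1$ and those elements merge.
\end{itemize}
Summing and dividing by $2n$ gives the stated formula. The condition $q\ge4$ is used to exclude small cases where extra collineations exist.
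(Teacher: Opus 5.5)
The paper gives no proof of this statement; it quotes Baker--Ebert ($q$ odd) and Ebert ($q$ even). Your outer strategy is the right one: normalise, show that an equivalence between non-classical BM unitals fixes $P_\infty$ and $\ell_\infty$, so it has the form $(x,y)\mapsto(\lambda x+c,\ \mu y+\nu x+d)$ with $\mu\in\mathbb{F}_q^*$ composed with a field automorphism, then count orbits. However, the two steps that actually produce the formula are wrong as stated.

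\emph{The parameter set.} Substitution shows that these maps send $(a,b)$ to $(\mu\lambda^{-2}a,\,b')$ with $b'^q-b'=\mu\lambda^{-q-1}(b^q-b)$.

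For $q$ even, $u=a^{q+1}/(b^q+b)^2$ is a complete invariant up to Frobenius. By the BM condition it ranges over the set $T_0$ of absolute-trace-zero elements of $\mathbb{F}_q$, which has size $q/2$, and $u=0$ is the Hermitian unital.

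For $q$ odd, $u$ is undefined when $b\in\mathbb{F}_q$, and that case must be allowed because it forms a class of its own. The paper's $\mathcal{M}_{a,b}$ assumes $b\notin\mathbb{F}_q$, so normalising to it loses a class; for $q=5$ it is one of the two non-classical ones. A complete invariant is $w=(b^q-b)^2/((b^q-b)^2+4a^{q+1})$, ranging over $\{x^2:x\in\mathbb{F}_q\}\setminus\{1\}$, a set of size $(q-1)/2$.

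Moreover, the stated number includes the Hermitian unital: for $q=p$ odd it is $(p+1)/2=(p-1)/2+1$. So ``roughly $p^n$ elements, minus the Hermitian class'' is wrong on both counts. What you must count is the Frobenius orbits on $T_0$, resp.\ on $\{x^2:x\in\mathbb{F}_q\}$, with the one-point orbit $\{1\}$ standing in for the Hermitian unital.

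\emph{The Burnside step.} A set of size $p^n$ appears only when you unfold these sets along the $\phi$-equivariant $2$-to-$1$ maps $x\mapsto x^2$ and $x\mapsto x^2+x$. That is, you count orbits of $G=\langle\phi\rangle\times\langle\tau\rangle$ on $\mathbb{F}_q$, where $\tau(x)=-x$ for $p$ odd and $\tau(x)=x+1$ for $p=2$. This group is not cyclic when $n$ is even, and your fixed-point bookkeeping breaks there:
\begin{itemize}
\item the $n$ elements $\phi^j$ give $\sum_{k\mid n}\Phi(n/k)p^k$, not $\Phi(2n/k)p^k$;
\item each $\tau\phi^j$ fixes $p^{\gcd(j,n)}$ points when $n/\gcd(j,n)$ is even;
\item when $n/\gcd(j,n)$ is odd, $\tau\phi^j$ fixes one point ($p$ odd) or none ($p=2$).
\end{itemize}
The even case is what turns $\Phi(n/k)$ into $\Phi(2n/k)$, and the odd case gives $n_0$.

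For $q=9$ your assignment totals $9+3+1=13$, which is not divisible by $4$. The true total is $9+3+1+3=16$; the missing $3$ are the fixed points of $x\mapsto -x^3$.

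For $p=2$, letting the involution ``merge'' with the identity would make $|G|=n$ and give the necklace count $\frac1n\sum_{k\mid n}\Phi(n/k)2^k$. That is $3$ for $q=4$ rather than $2$. In fact $n_0=0$ because $\tau$ is a fixed-point-free Artin--Schreier translation, which reflects the trace condition.
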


For \(\PG(3,q^2)\), complete classifications are available.

\begin{theorem}[\cite{AG}]\label{thm:AG-2023}
Let \(q=p^n\) with \(p\) odd. The number of projectively
inequivalent BM quasi-Hermitian varieties \(\cM_{a,b}\)
in \(\PG(3,q^2)\) is
\[
\frac{1}{n}\bigg(\sum_{k\mid n}\Phi\!\left(\frac{n}{k}\right)p^k \bigg)-2.
\]
\end{theorem}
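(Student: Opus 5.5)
The statement to be proved is the count, due to \cite{AG}, of projectively inequivalent BM quasi-Hermitian varieties $\cM_{a,b}$ in $\PG(3,q^2)$ for $q=p^n$ odd. The natural route is Burnside's lemma. First I reduce the parameter space to a set on which a cyclic group acts, so that the orbits are exactly the projective equivalence classes. Then I count fixed points element by element.

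\emph{Step 1: normalize the parameters.} For $r=3$ and $q$ odd, conditions \eqref{cond:BM} only require $4a^{q+1}+(b^q-b)^2\neq 0$. I would determine the subgroup of $P\Gamma L(4,q^2)$ that stabilizes the common structure of all $\cM_{a,b}$: the point $P_\infty$, the hyperplane $\Sigma_\infty$ and the cone $\mathcal{F}$. This subgroup contains translations $x_3\mapsto x_3+\text{(linear)}$, unitary maps on $(x_1,x_2)$, homotheties, and the Frobenius $\phi\colon x\mapsto x^p$.

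Next I act on $(a,b)$. Replacing $b$ by $b+c$ with $c\in\mathbb{F}_q$ leaves $b^q-b$ unchanged. Scalings $x_i\mapsto\lambda x_i$, $x_3\mapsto \mu x_3$ multiply $a$ by $\lambda^2/\mu$ and $b^q-b$ by $\lambda^{q+1}/\mu$ (with $\mu\in\mathbb{F}_q$). Together these should reduce everything to a single invariant, essentially
\[
t=\frac{4a^{q+1}}{(b^q-b)^2}\in\mathbb{F}_q .
\]
Since $(b^q-b)^2$ is a non-square times a square in $\mathbb{F}_q$, this $t$ ranges over $\mathbb{F}_q$, and the forbidden value from \eqref{cond:BM} is $t=-1$. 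The value $t=0$ corresponds to $a=0$, which gives the classical Hermitian surface and is excluded. So the admissible set is $\mathbb{F}_q\setminus\{0,-1\}$.

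The key claim of this step is that $\cM_{a,b}\cong\cM_{a',b'}$ if and only if $t'=t^{p^k}$ for some $k$. The "if" direction follows from the maps above composed with Frobenius. The "only if" direction requires showing that every collineation between two such varieties fixes $P_\infty$ and $\Sigma_\infty$, and so lies in the stabilizer just described. For this I would use the fact that $P_\infty$ is the unique point lying on the largest number of lines contained in $\cM_{a,b}$, or else cite the automorphism-group computation of \cite{ACK}. I expect this rigidity statement to be the main obstacle. The most delicate part is checking that the unitary part acting on $(x_1,x_2)$, which preserves $x_1^{q+1}+x_2^{q+1}$ but not $x_1^2+x_2^2$, cannot change $t$ except by the allowed moves. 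To handle this I would show that the stabilizer of $x_1^2+x_2^2$ up to scalar inside $GU(2,q)$ rescales both terms compatibly.

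\emph{Step 2: count orbits.} The group $\langle\phi\rangle\cong C_n$ acts on $\mathbb{F}_q$. For the element $\phi^j$, the fixed set in $\mathbb{F}_q$ is $\mathbb{F}_{p^{\gcd(j,n)}}$. Burnside's lemma gives
\[
\frac1n\sum_{j=1}^{n}p^{\gcd(j,n)}=\frac1n\sum_{k\mid n}\Phi\!\left(\frac nk\right)p^k
\]
orbits on all of $\mathbb{F}_q$. The two excluded values $0$ and $-1$ lie in the prime field, so each is fixed by $\phi$ and forms a singleton orbit. Removing them subtracts exactly $2$, which yields the stated formula. It remains only to confirm that $t=-1$ is truly degenerate (not a quasi-Hermitian variety) rather than just another orbit, and this holds because it is precisely the condition excluded in \eqref{cond:BM}.
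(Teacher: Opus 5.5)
The paper gives no proof of this statement; it is quoted from \cite{AG}, so your argument has to stand on its own. Most of it does. The invariant $t=4a^{q+1}/(b^q-b)^2$ is unchanged by translations and by the scalings $(x,y,z)\mapsto(\lambda x,\lambda y,\mu z)$ with $\mu\in\mathbb{F}_q^*$. Equal values of $t$ do give equivalent varieties. A Frobenius power acts as $t\mapsto t^{p^j}$. Your Burnside count on $\mathbb{F}_q\setminus\{0,-1\}$ gives exactly the stated number. (This counts classes under $P\Gamma L(4,q^2)$; with linear maps only, the same argument gives $q-2$.)

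\textbf{Rigidity step.} The gap is the step you single out yourself. The property you propose, that $P_\infty$ is the unique point on the largest number of lines contained in $\mathcal{M}_{a,b}$, is false for $q\equiv 1\pmod 4$. There $i=\sqrt{-1}\in\mathbb{F}_q$ satisfies $i^{q+1}=-1$, so $\ell_{\pm i}\colon J=0,\ Y=\pm iX$ are lines of $\mathcal{F}$. Normalize the affine part as $z-a(x^2+y^2)-b(x^{q+1}+y^{q+1})\in\mathbb{F}_q$. Through an affine point $(x_0,y_0,z_0)$ of $\mathcal{M}_{a,b}$, the line with direction $(1,\pm i,w)$ lies on $\mathcal{M}_{a,b}$ iff $w=2au+(b-b^q)u^q$, where $u=x_0\pm iy_0$. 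This $\mathbb{F}_q$-linear map of $\mathbb{F}_{q^2}$ is bijective precisely because $4a^{q+1}+(b^q-b)^2\neq 0$. Hence every point of $(\ell_i\cup\ell_{-i})\setminus\{P_\infty\}$ lies on $q$ affine lines of $\mathcal{M}_{a,b}$ plus $\ell_{\pm i}$, i.e.\ on $q+1$ lines, exactly like $P_\infty$. Your fallback of citing the automorphism group from \cite{ACK} is not available either: this paper lists the full automorphism group for $r=3$, $q$ odd as an open problem.

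\textbf{Repair.} The fix is short. A line of $\mathcal{M}_{a,b}$ not in $\Sigma_\infty$ has direction $(u,v,w)$ with $u^2+v^2=0=u^{q+1}+v^{q+1}$; this comes from the coefficients of $s^2$ and $s^{q+1}$, using $a\neq0$ and $b\notin\mathbb{F}_q$. Lines through $P_\infty$ other than those of $\mathcal{F}$ are excluded by comparing the coefficients of $s$ and $s^q$. These equations have no solution with $(u,v)\neq(0,0)$ when $q\equiv 3\pmod 4$, since then $i^{q+1}=1$. The only lines inside $\Sigma_\infty$ are the $q+1$ lines of $\mathcal{F}$. So the points on $q+1$ lines form $\{P_\infty\}$ if $q\equiv 3\pmod 4$, and $\ell_i\cup\ell_{-i}$ if $q\equiv 1\pmod 4$. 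Affine points lie on $0$, resp.\ $2$, lines, and the remaining points of $\mathcal{F}$ on one. In both cases any collineation $\mathcal{M}_{a,b}\to\mathcal{M}_{a',b'}$ fixes $P_\infty$, and also $\Sigma_\infty$, the span of these lines.

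\textbf{Smaller omission.} After a translation and a Frobenius power, the map has the form $(x,y,z)\mapsto(U(x,y),\mu z+L(x,y))$. You must show that:
\begin{itemize}
\item $\mu\in\mathbb{F}_q^*$, because the fibre over a fixed $(x,y)$ is a coset of $\mathbb{F}_q$ on both sides;
\item $L=0$;
\item $U$ is a similitude of both $x^2+y^2$ and $x^{q+1}+y^{q+1}$.
\end{itemize}
The last two follow by comparing coefficients in the reduced polynomial identity, since the monomials of degrees $1$, $2$, $q$, $2q$ and $q+1$ are distinct. A unitary $U$ that does not preserve $x^2+y^2$ up to a scalar sends $\mathcal{M}_{a,b}$ outside the family altogether. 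For common similitudes with multipliers $\kappa$ and $\rho$, the identities $\det(U)^2=\kappa^2$ and $\det(U)^{q+1}=\rho^2$ give $\kappa^{q+1}=\rho^2$. That is exactly the compatibility that leaves $t$ fixed.
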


\begin{theorem}[\cite{AGMS}]\label{thm:AGMS-2025}
For even \(q\), all BM quasi-Hermitian varieties
\(\cM_{a,b}\) in \(\PG(3,q^2)\) are projectively equivalent.
\end{theorem}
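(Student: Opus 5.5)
The plan is to exhibit, for any two admissible pairs $(a,b)$ and $(a',b')$, an explicit projectivity of $\PG(3,q^2)$ mapping $\cM_{a,b}$ onto $\cM_{a',b'}$. The idea is to show that every $\cM_{a,b}$ is equivalent to one normalized member, say the one with $a=1$ and $b^q+b=1$. For $q$ even and $r=3$, conditions \eqref{cond:BM} impose nothing on $(a,b)$, so every $a\neq 0$ and $b\notin\mathbb{F}_q$ is allowed.

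\textbf{Step 1: reduce to two parameters.} In characteristic $2$ the affine equation of $\mathcal B_{a,b}$ reads
\[
x_3^q+x_3+a^q(x_1+x_2)^{2q}+a(x_1+x_2)^2=c\,(x_1^{q+1}+x_2^{q+1}),\qquad c:=b^q+b\in\mathbb{F}_q^*,
\]
using $x_1^2+x_2^2=(x_1+x_2)^2$. Hence $\cM_{a,b}$ depends on $b$ only through $c$, and on $a$ only through the quadratic term in $u:=x_1+x_2$.

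\textbf{Step 2: change to hyperbolic coordinates.} The vector $(1,1)$ is isotropic for the Hermitian form $x_1^{q+1}+x_2^{q+1}$. I would pass to coordinates $(u,w)$ adapted to a hyperbolic pair containing it: $u=x_1+x_2$ and $w=\lambda_0 x_1+\mu_0 x_2$, with $\lambda_0,\mu_0$ chosen so that the form becomes $u^qw+uw^q$. In these coordinates the equation is
\[
x_3^q+x_3+\mathrm{Tr}(a u^2)=\mathrm{Tr}(c\,u w^q)=c\,(u^qw+uw^q).
\]

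\textbf{Step 3: normalize by diagonal maps.} Next I apply the linear maps $u\mapsto \lambda u$, $w\mapsto \nu w$, $x_3\mapsto \mu x_3$ with $\lambda,\nu\in\mathbb{F}_{q^2}^*$ and $\mu\in\mathbb{F}_q^*$, so that $x_3^q+x_3$ is simply rescaled. These send $a\mapsto a\lambda^2/\mu$ and $c\mapsto c\lambda\nu^q/\mu$, where $c$ is the coefficient in front of $\mathrm{Tr}(uw^q)$; if the scaling takes it out of $\mathbb{F}_q$, a further rescaling of $w$ brings it back, and I expect this to cause no difficulty. Squaring is bijective on $\mathbb{F}_{q^2}$, so $\lambda$ can be chosen with $a\lambda^2=\mu$, i.e. new $a=1$. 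Then $\nu$ is chosen to make the Hermitian coefficient equal to $1$.

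\textbf{Step 4: check the points at infinity.} These maps are affine-linear, so they extend to projectivities fixing $\Sigma_\infty$. I must also verify that they map $\mathcal F$ onto $\mathcal F$. This holds because each map multiplies the Hermitian form $X_1^{q+1}+X_2^{q+1}$ by a nonzero scalar, hence preserves the cone it defines in $X_0=0$, while fixing $P_\infty$.

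The main obstacle I expect is the bookkeeping in Steps 2 and 3. One has to pick the hyperbolic basis so that $u=x_1+x_2$ is exactly one of the coordinates, and then make sure the scalings keep the equation in the form $x_3^q+x_3+\mathrm{Tr}(\cdot)=\mathrm{Tr}(\cdot)$ with coefficients in the right fields. If a direct normalization of $c$ fails, the fallback is to also use the substitutions $x_3\mapsto x_3+\alpha u+\beta w$. These shift the equation only by $\mathbb{F}_q$-linear terms, which can then be absorbed.
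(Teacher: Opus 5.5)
The paper gives no proof of this statement; it is quoted from \cite{AGMS}. So there is no in-text argument to compare with, and your proposal has to stand on its own. It does: it is a correct and complete proof, and the hedges in Steps 2--3 can simply be removed.

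Take $\lambda_0\in\mathbb{F}_{q^2}$ with $\lambda_0+\lambda_0^q=1$ and set $w=\lambda_0x_1+\lambda_0^qx_2$. The change $(x_1,x_2)\mapsto(u,w)$ has determinant $\lambda_0^q+\lambda_0=1$ and turns $x_1^{q+1}+x_2^{q+1}$ exactly into $u^qw+uw^q$. Then substitute $u=\lambda u'$, $w=\nu w'$, $x_3=\mu x_3'$ with $\lambda^2=\mu/a$ and $\nu=(\mu/(c\lambda))^q$. This forces $\lambda\nu^q=\mu/c\in\mathbb{F}_q^*$ from the outset. So no second rescaling of $w$ is needed, nor the fallback $x_3\mapsto x_3+\alpha u+\beta w$. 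The equation becomes $\mathrm{Tr}(x_3')+\mathrm{Tr}(u'^2)=\mathrm{Tr}(u'w'^q)$, which is the equation of $\mathcal{B}_{1,b_0}$ with $b_0^q+b_0=1$. The composite linear map on $(X_1,X_2)$ multiplies $X_1^{q+1}+X_2^{q+1}$ by $(\lambda\nu^q)^{-1}=c/\mu\in\mathbb{F}_q^*$, so $\mathcal{F}$ is preserved, as in your Step 4. Since $\cM_{a,b}$ depends only on $(a,b^q+b)$, this finishes the proof.

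What your route buys, compared with citing \cite{AGMS}, is an explicit projectivity and a clear reason why even characteristic is special. There the quadratic part is the square of the linear form $u$, whose kernel is an isotropic point of the Hermitian form. Hence the diagonal torus of a hyperbolic basis rescales the quadratic and Hermitian parts independently.

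In odd characteristic $x_1^2+x_2^2$ is non-degenerate. A linear map $L$ multiplying it by $m$ and the Hermitian form by $k$ satisfies $\det(L)^2=m^2$ and $\det(L)^{q+1}=k^2$, hence $m^{q+1}=k^2$. So maps of this shape leave $a^{q+1}/(b^q-b)^2$ unchanged, in line with the many classes in Theorem~\ref{thm:AG-2023}.

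Your argument does not recover the stabilizer orders quoted after the theorem; those need a separate analysis.
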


In particular, in even characteristic a rigidity phenomenon
occurs: the entire parametric family collapses into a single
projective equivalence class.

The structure of the corresponding automorphism groups
has also been determined.
For even \(q\), the stabilizer of \(M_{a,b}\) in
\(PGL_4(q^2)\) has order \(q^6(q-1)\),
and the semilinear stabilizer in
\(P\Gamma L_4(q^2)\) has order
\(q^6(q-1)\log_2 q\)
(see~\cite{AGMS} for explicit descriptions).

\medskip

We now turn to BT quasi-Hermitian varieties.
In the planar case, all BT unitals are equivalent under
the action of \(P\Gamma L(3,q^2)\).
The equivalence problem in higher dimension
has also been settled.

\begin{theorem}[\cite{AM}]
Let \(\varepsilon_1,\varepsilon_2 \in \GF{q^2}\)
satisfy \(\varepsilon_i^2+\varepsilon_i=\delta_i\)
with \(\operatorname{tr}(\delta_i)=1\), \(i=1,2\).
Then \(\mathcal{H}_{\varepsilon_1}^r\) and
\(\mathcal{H}_{\varepsilon_2}^r\) are projectively eqivalent.
\end{theorem}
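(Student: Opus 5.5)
The natural route is to find an explicit projectivity of $\PG(r,q^2)$ carrying $\mathcal{V}^r_{\varepsilon_1}$ onto $\mathcal{V}^r_{\varepsilon_2}$ that fixes $\Sigma_\infty$ and stabilizes $\mathcal{F}$. Since $\varepsilon_1,\varepsilon_2$ are both roots of polynomials $x^2+x+\delta_i$ with $\operatorname{tr}(\delta_i)=1$, they both lie in $\GF{q^2}\setminus\GF{q}$. Also $\delta_1+\delta_2$ has absolute trace $0$, so $\delta_2=\delta_1+c^2+c$ for some $c\in\GF{q}$. Hence, up to replacing $\varepsilon_2$ by its conjugate $\varepsilon_2+1=\varepsilon_2^q$, we may write $\varepsilon_2=\varepsilon_1+c$ with $c\in\GF{q}$. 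I would first check that the conjugate choice does not matter. Applying the Frobenius collineation $x\mapsto x^q$ would only yield semilinear equivalence, so instead I would verify directly that the substitution $\varepsilon\mapsto\varepsilon^q=\varepsilon+1$ is realized by a linear change of coordinates, folded into the general case below.

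The main step is to look for a map of the form
\[
(x_1,\dots,x_{r-1},x_r)\mapsto (x_1,\dots,x_{r-1},\,x_r+\textstyle\sum_i \ell(x_i)+\lambda)
\]
possibly composed with a scaling $x_i\mapsto \mu x_i$, where $\ell$ is $\GF{q}$-linear. Such a map is projective only if $\ell(x)=\alpha x$ with $\alpha\in\GF{q^2}$. The goal is to make $\Gamma_{\varepsilon_1+c}(x)-\Gamma_{\varepsilon_1}(x)$ equal to an expression of the form $y^q+y$ plus $\alpha x+\alpha^qx^q$-type terms, since $x_r\mapsto x_r+\beta$ changes $x_r^q+x_r$ by $\beta^q+\beta$. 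This is done coordinate-by-coordinate, because $\Gamma$ is additive over the coordinates. Concretely, I would write $x=u+v\varepsilon$ with $u,v\in\GF{q}$, so that $x^q+x=v$ and $x+(x^q+x)\varepsilon=u+v\varepsilon+v\varepsilon=u$. Then
\[
\Gamma_\varepsilon(x)=u^{\sigma+2}+v^\sigma+v^2\varepsilon+x^{q+1}+x^2 .
\]
Expressing everything in $u,v$ shows that the BT variety has an $\GF{q}$-form in which $\varepsilon$ enters only through the basis $\{1,\varepsilon\}$. The reparametrization $\varepsilon\mapsto\varepsilon+c$ then corresponds to the $\GF{q}$-linear change $u\mapsto u+cv$.

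The main obstacle is that $u\mapsto u+cv$ is not $\GF{q^2}$-linear in $x$, and $u^{\sigma+2}$ is not additive. So instead of fixing $x_i$, I would try $x_i\mapsto x_i+\gamma(x_i^q+x_i)$, which is $\GF{q}$-linear but not $\GF{q^2}$-linear. If that fails, I would use the freedom in choosing the affine frame: a map $x_i\mapsto \mu x_i$ with $\mu^{q+1}=1$ preserves $\sum x_i^{q+1}$ and acts on $\{1,\varepsilon\}$ in a controlled way. I would then check whether the remaining terms $v^\sigma+v^2\varepsilon$ absorb into $x_r$ as $y^q+y$ terms via $\GF{q^2}$-linear corrections. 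If neither approach works, I would invoke the known planar fact that all BT unitals are $P\Gamma L$-equivalent and extend the planar projectivity diagonally to all coordinates $x_1,\dots,x_{r-1}$, using additivity of the defining equation. Finally I would confirm that the map fixes $P_\infty$, preserves $\Sigma_\infty$, and maps $\mathcal{F}$ to itself, which is immediate for maps that act linearly and diagonally on $x_1,\dots,x_{r-1}$.
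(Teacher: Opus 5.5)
The paper does not prove this statement; it is quoted from \cite{AM}. So I assess your proposal on its own.

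\textbf{What is right.} Your set-up is correct and can be sharpened. First, $\varepsilon_1+\varepsilon_2$ is a root of $y^2+y=\delta_1+\delta_2$, whose roots lie in $\GF{q}$. Hence $\varepsilon_2=\varepsilon_1+c$ with $c\in\GF{q}$, and no conjugation is needed. Second, $x^{q+1}+x^2=x(x^q+x)=uv+v^2\varepsilon_1$. So your formula collapses to $\Gamma_{\varepsilon_1}(x)=f(u,v):=u^{\sigma+2}+uv+v^{\sigma}$, and $\Gamma_{\varepsilon_2}(x)=f(u+cv,v)$.

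\textbf{The gap.} The proposal never exhibits a projectivity, and none of the routes you list works as stated.
\begin{enumerate}
\item The map $x_i\mapsto x_i+\gamma(x_i^q+x_i)$ is not $\GF{q^2}$-semilinear, so it is not a collineation of $\PG(r,q^2)$ at all.
\item Scalings $x_i\mapsto ax_i$ (norm one or not), followed by a shear $x_r\mapsto\mu x_r+\gamma\sum_i x_i+d$, cannot work when $c\neq0$. Write $a=l_0+l_1\varepsilon_1$. Then $\Gamma_{\varepsilon_2}(ax)=f(U,V)$ with $U=\alpha u+\beta v$ and $V=l_1u+(l_0+l_1)v$, where $\alpha=l_0+cl_1$ and $\beta=\delta_1l_1+c(l_0+l_1)$. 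Compare this with $\mu f(u,v)$ plus an $\GF{q}$-affine function, monomial by monomial; all exponents are below $q$ since $e\ge3$. The coefficient of $v^{\sigma+2}$ forces $\beta=0$. The coefficient of $u^2$, namely $\alpha l_1$, forces $l_1=0$. Then $\beta=cl_0$ forces $c=0$.
\item The fallback to the planar $P\Gamma L$-equivalence of BT unitals yields only a collineation. That is exactly the semilinear equivalence you rightly rejected for the Frobenius map, so it does not give projective equivalence.
\end{enumerate}

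\textbf{The missing idea: a translation in the $x_i$.} Take $x_i\mapsto ax_i+b$ for $i<r$ with $b\in\GF{q}$, together with the shear in $x_r$. Then $U$ gains the constant $b$, and $f(U,V)$ gains the terms $\alpha^2b^{\sigma}u^2+\alpha^{\sigma}b^2u^{\sigma}$. These cancel the terms $\alpha l_1u^2+l_1^{\sigma}u^{\sigma}$ coming from $UV$ and $V^\sigma$ exactly when $l_1=\alpha b^{\sigma}$. The two cancellation conditions are compatible because $\sigma^2$ acts as squaring on $\GF{q}$. Adding $\beta=0$ and $\alpha^{\sigma+2}=\alpha(l_0+l_1)=(l_0+l_1)^{\sigma}=\mu$, the system is solved by
\[
\alpha^{\sigma}=\frac{\delta_1}{\delta_2},\qquad l_0+l_1=\alpha^{\sigma+1},\qquad l_1=\frac{c\,\alpha^{\sigma+1}}{\delta_1},\qquad b^{\sigma}=\frac{c}{\delta_2},\qquad \mu=\alpha^{\sigma+2}.
\]
The relation $\alpha=l_0+cl_1$ is consistent with this because $c^2+c=\delta_1+\delta_2$. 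Every remaining term is $\GF{q}$-affine in $x$. It is therefore equal to $\mathrm{Tr}(\gamma x)+\mathrm{Tr}(d)$ for suitable $\gamma,d\in\GF{q^2}$, and is absorbed by the shear.

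\textbf{Extension to all $r$.} The defining equation is a sum over coordinates, so the same $a,b$ work for every $i<r$. On $\Sigma_\infty$ the map is $(0,X_1,\dots,X_r)\mapsto(0,aX_1,\dots,aX_{r-1},\mu X_r+\gamma\sum_iX_i)$. This sends $\mathcal F$ onto itself for any $a\neq0$. Your norm-one restriction is therefore unnecessary; in fact here $a^{q+1}=\mu$. Note also that even the conjugate case $c=1$, i.e.\ $\varepsilon_2=\varepsilon_1^q$, already requires $b\neq0$.
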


For \(r=3\), the automorphism group of
\(\mathcal{H}_\varepsilon^3\) has been explicitly determined;
in particular,
\[
|Aut(\mathcal{H}_\varepsilon^3)\cap PGL_4(q^2)|=2q^3,
\quad
|Aut(\mathcal{H}_\varepsilon^3)|=4eq^3.
\]
We refer to~\cite{AA,AM} for full structural descriptions.

\medskip

This summary provides the foundational context for presenting some open problems, which are collected in the final section of the article.

It is worth emphasizing that the various known constructions
of quasi-Hermitian varieties (BM, BT, and other families)
are not projectively equivalent in general, see \cite{LS}.
Having reviewed the constructions and equivalence results,
we now turn to the Barlotti--Cofman representation,
which provides the framework for the structural analysis
carried out in the subsequent sections.

\section{ Barlotti-Cofman representation  of $\mathrm{PG}(r,q^n)$}
\label{sec:BCrep}
 We describe the Barlotti--Cofman (BC) representation  of $\mathrm{PG}(r,q^n)$ that provides an \(\mathbb{F}_q\)-linear model of \(\mathrm{PG}(r,q^n)\) inside \(\mathrm{PG}(rn,q)\), exploiting the structure of Desarguesian spreads. This representation is a powerful tool for studying varieties defined over extension fields by embedding them into a higher-dimensional space over the base field.

Our main reference for this section is
\cite[Section 2]{MP} .

Consider a family $\mathcal{S}$ of $(n-1)$-subspaces in the projective space $\PG(m-1,q)$. This $\mathcal{S}$ is called  a \textbf{$(n-1)$-spread}  of  $\PG(m-1,q)$ if its elements are disjoint and every point of $\PG(m-2,q)$ is contained in one of the subspaces of $\mathcal{S}$. An $(m-1)$-dimensional projective space admits an $(n-1)$-spread  if and only if $n$ divides $m$.

Given an $(n-1)$-spread $\mathcal{S}$ of $\Sigma = \mathrm{PG}(rn-1,q)$ and an ambient space $\Sigma^\ast = \mathrm{PG}(rn,q)$ with $\Sigma$ as a hyperplane, one can consider a $2$-$(q^{rn}, q^n, 1)$ design $D(\mathcal{S})$ whose points are those of $\Sigma^\ast \setminus \Sigma$ and whose blocks are the $n$-subspaces of $\Sigma^\ast$ meeting $\Sigma$ in an element of $\mathcal{S}$. The spread $\mathcal{S}$ is called \emph{Desarguesian} precisely when $D(\mathcal{S}) \cong \mathrm{AG}(r,q^n)$. Up to isomorphism, there is a unique Desarguesian $(n-1)$-spread of $\mathrm{PG}(rn-1,q)$.

Desarguesian spreads are the key ingredient for the \textbf{Barlotti-Cofman representation} of $\PG(r,q^n)$ in $\PG(rn,q)$ that we illustrate in the following.

Let $\mathcal{S}$ be a Desarguesian $(n-1)$-spread of $\Sigma = \mathrm{PG}(rn-1,q)$ and let $\Sigma^\ast = \mathrm{PG}(rn,q)$ with $\Sigma$ as a hyperplane. An \emph{$\mathcal{S}$-subspace} of $\Sigma$ is a subspace $X \subseteq \Sigma$ whose points are partitioned by the elements of $\mathcal{S}$.
Whenever $X$ is an $\mathcal{S}$-subspace distinct from a single spread element, the spread $\mathcal{S}$ induces on $X$ a Desarguesian $(n-1)$-spread $\mathcal{S}_X$.
In this case, the dimension of $X$ is $tn-1$, where $2 \le t \le r$.

 $\mathcal{S}$ determines a representation of $\PG(r-1,q^n)$ in $\PG(rn-1,q)$ where  the points and the $t$-subspaces of $\PG(r-1,q^n)$ correspond to the  elements of the spread and the $n(t+1)-1$ subspaces of
$\PG(rn-1,q)$; we denote with $\PG(\mathcal{S})$ this representation.
Next, define the point-line geometry
\[
\Pi_r = \Pi_r(\Sigma^\ast, \Sigma, \mathcal{S})
\]
as follows:
\begin{itemize}
    \item \emph{Points}: the points of $\Sigma^\ast \setminus \Sigma$ (called \emph{affine points}) and the elements of the spread $\mathcal{S}$;
    \item \emph{Lines}: the $n$-subspaces of $\Sigma^\ast$ intersecting $\Sigma$ in an element of $\mathcal{S}$ and the lines of $\mathrm{PG}(\mathcal{S})$;
    \item \emph{Incidences}: inherited from $\Sigma$ and $\Sigma^\ast$.
\end{itemize}
Then $\Pi_r$ is isomorphic to $\mathrm{PG}(r,q^n)$; this is the \emph{Barlotti--Cofman representation} (BC representation) of $\mathrm{PG}(r,q^n)$ in $\mathrm{PG}(rn,q)$ (an $\mathbb{F}_q$-linear representation). Moreover, a $t$-subspace of $\Pi_r$ is either:
\begin{enumerate}
    \item a $tn$-subspace of $\Sigma^\ast$ meeting $\Sigma$ in an $\mathcal{S}$-subspace of dimension $tn-1$, or
    \item a $t$-subspace of $\mathrm{PG}(\mathcal{S})$, i.e., an $\mathcal{S}$-subspace of $\Sigma$ of dimension $(t+1)n-1$.
\end{enumerate}
In particular, $\mathrm{PG}(\mathcal{S})$ is a hyperplane of $\Pi_r$.

\medskip

\subsection{BC representation of Hermitian varieties}\label{ssec:Q}
\noindent\textbf{Case $n=2$.} Assume that $\mathcal{S}$ is a line-spread of $\Sigma = \mathrm{PG}(2r-1,q)$ and $\Pi_r$ is the BC representation of $\mathrm{PG}(r,q^2)$ in $\Sigma^\ast = \mathrm{PG}(2r,q)$.

An Hermitian variety $\cH(r,q^2)$ in $\PG(r,q^2)$ that is secant to the hyperplane $\PG(\mathcal{S})$ is represented in $\Sigma^\ast$ by a parabolic quadric $Q(2r,q)$  intersecting $\Sigma$ in a union of elements of $\mathcal{S}$ that form a hyperbolic quadric $Q^+(2r,q)$ (r even) or an elliptic quadric $Q^-(2r,q)$ (r odd).
An Hermitian variety $\cH(r,q^2)$ that is tangent to $\PG(\mathcal{S})$ in a point $Y$ ($Y$ represented by an element of $\mathcal{S}$), is represented in $\Sigma^\ast$ by a quadric cone $\mathcal{H}'$ such that
\begin{itemize}
\item[Q1)] the vertex of $\mathcal{H}'$ is a point of the line $Y$;
\item[Q2)] the base is a quadric $Q^+(2r-1,q)$ ($r$ odd) or $Q^-(2r-1,q)$ ($r$ even);
\item[Q3)] $\mathcal{H}=\mathcal{H}'\cap \Sigma$ is a cone which is a union of elements of $\mathcal{S}$ with $Y$ as vertex.
\end{itemize}
 The following result holds for $r=3,4$ from \cite{MP}:
 \begin{theorem}
     Every quadric cone $\mathcal{H}'$ in $\PG(2r,q)$, with $r=3,4$, satisfying conditions Q1), Q2) and Q3) represents in $\Pi_r$ a non degenerate Hermitian variety $\cH(r,q^2)$ having $\PG(\mathcal{S})$ as tangent hyperplane.
 \end{theorem}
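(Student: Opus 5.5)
Since this result is quoted from \cite{MP}, the plan is a sketch of a direct proof by counting and geometry in $\Pi_r$. Let $\mathcal{K}$ be the point set of $\Pi_r$ determined by $\mathcal{H}'$: the affine points of $\mathcal{H}'\setminus\Sigma$ together with the spread elements contained in $\mathcal{H}=\mathcal{H}'\cap\Sigma$. By Q3) this set is well defined, and it is a cone of $\PG(\mathcal{S})\cong\PG(r-1,q^2)$ with vertex $Y$. First I would show that $\mathcal{K}\cap\PG(\mathcal{S})$ is a cone $Y\mathcal{H}(r-2,q^2)$. The base of $\mathcal{H}$ is a quadric $Q^{\pm}(2r-3,q)$, obtained by projecting from $Y$ in the quotient $\PG(2r-3,q)$, and it is a union of spread lines of the induced Desarguesian spread. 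For $r=3,4$, a quadric $Q^{+}(3,q)$ or $Q^{-}(5,q)$ that is a union of lines of a Desarguesian spread is exactly the BC image of a Baer subline, resp.\ of a Hermitian curve $\mathcal{H}(2,q^2)$. This is a standard fact: a regular spread contained in $Q^+(3,q)$ is a regulus, and $Q^-(5,q)$ with $q^3+1$ spread lines gives the Hermitian curve. The restriction to $r=3,4$ enters here.

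Next, I would reconstruct the Hermitian form in affine coordinates. Choose coordinates of $\PG(2r,q)$ so that $\Sigma:X_0=0$, the spread is given by $\mathbb{F}_{q^2}$-scalar multiplication on $\mathbb{F}_{q^2}^r\cong\mathbb{F}_q^{2r}$, and the vertex $V\in Y$ is the point at infinity of the coordinate $x_r$. Since $\mathcal{H}'$ is a cone with vertex $V$, its equation in the $2r$ affine $\mathbb{F}_q$-coordinates $(x_1,\dots,x_r)\in\mathbb{F}_{q^2}^r$ does not involve the $\mathbb{F}_q$-component of $x_r$ along $V$. The equation can therefore be written as $\mathrm{Tr}(\lambda x_r)=Q(x_1,\dots,x_{r-1})+L(x)+c$, where $Q$ is an $\mathbb{F}_q$-quadratic form on $\mathbb{F}_{q^2}^{r-1}$. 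The key step is to show that $Q$ is the trace of a Hermitian form, i.e.\ $Q(x)=\sum h_{ij}x_i^q x_j$ with $H$ Hermitian. The quadratic part of the equation of $\mathcal{H}'$ restricted to $\Sigma$ is, modulo $V$, exactly the form defining the base $Q^{\pm}(2r-3,q)$. The first step identifies that base as a Hermitian variety of $\PG(r-2,q^2)$ under BC, so the base form is a nonzero $\mathbb{F}_q$-multiple of $x\mapsto\sum h_{ij}x_i^q x_j$. This relies on the fact that the quadratic forms vanishing on all the relevant spread lines span a single $\mathbb{F}_q$-line; that can be checked by dimension counting for $r=3,4$. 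Q2) gives the nondegeneracy of $H$.

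Finally, I would absorb the linear terms $L$ by a translation $x_i\mapsto x_i+u_i$, which is an affine collineation preserving the spread. Choosing $\mu\in\mathbb{F}_{q^2}$ with $\mathrm{Tr}(\lambda x_r)$ of the form $x_r^q\mu^q+x_r\mu$, the equation becomes $\mu^qx_r^q+\mu x_r=\sum h_{ij}x_i^qx_j+c'$, with $c'\in\mathbb{F}_q$ absorbed by translating $x_r$. This is a nondegenerate Hermitian variety of $\PG(r,q^2)$ whose points at infinity are the cone $Y\mathcal{H}(r-2,q^2)$, so $\PG(\mathcal{S})$ is tangent at $Y$. I expect the main obstacle to be the key step: showing that the $\mathbb{F}_q$-quadric is Hermitian-type, i.e.\ that the spread-compatibility of $\mathcal{H}'\cap\Sigma$ forces $\mathbb{F}_{q^2}$-sesquilinearity of the quadratic part. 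I would attack it by writing the general quadratic form on $\mathbb{F}_{q^2}^{r-1}$ as a combination of $x_i^qx_j$, $x_ix_j$ and $x_i^qx_j^q$ with trace conditions. Then I would impose vanishing on every $\mathbb{F}_{q^2}$-multiple $\{\alpha v\}$ of the points $v$ of the base. This kills the $x_ix_j$ and $x_i^q x_j^q$ terms, because $\alpha^2$ and $\alpha^{q+1}$ are independent functions of $\alpha$.
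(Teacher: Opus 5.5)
The paper itself gives no proof of this statement (it is quoted from \cite{MP}), so your coordinate argument has to stand on its own.

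Its overall frame is sound. Conditions Q2) and Q3) put the whole line $Y$, not just $V$, in the singular radical of the form restricted to $\Sigma$. This, rather than Q1) alone, is why the second $\mathbb{F}_q$-coordinate of $x_r$ appears only linearly, and you should say so. Q2) also makes the coefficient $\lambda$ nonzero. Once you know $Q(x')=h(x',x')$ for a non-degenerate Hermitian form $h$, your last paragraph correctly produces a non-degenerate Hermitian variety tangent to $\PG(\mathcal{S})$ at $Y$.

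The gap is the key step itself, which neither of your two justifications proves.

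First, ``dimension counting'' can only bound the space of quadratic forms vanishing on a point set from below. It cannot show that this space is a single $\mathbb{F}_q$-line. That $Q^{+}(3,q)$ and $Q^{-}(5,q)$ determine their equations up to a scalar is true, but it needs an actual argument.

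Second, in your direct attack write $Q=h+\mathrm{Tr}\circ B$ with $B$ an $\mathbb{F}_{q^2}$-quadratic form. The identity $0=Q(\alpha v)=\alpha^{q+1}h(v,v)+\alpha^{2}B(v,v)+\alpha^{2q}B(v,v)^{q}$ for all $\alpha\in\mathbb{F}_{q^2}$ gives only $h(v,v)=B(v,v)=0$ at the singular vectors $v$. It does not make $B$ vanish identically.

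This is not a formality: nothing in your sketch distinguishes $r=2$, where the statement is false. There the BC image of the BM unital $\mathcal{M}_{a,b}$ is a quadric cone satisfying Q1)--Q3), with base $Q^-(3,q)$. Its section at infinity is just $Y$, and $B\neq 0$ (it comes from the term $ax^2$). Your reasoning, as written, would nevertheless conclude that this unital is Hermitian.

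What is missing is a count. Let $U\subseteq\PG(r-2,q^2)$ be the set of points whose spread lines lie in the base; you need that $U$ is too large to lie on a quadric of $\PG(r-2,q^2)$.
\begin{itemize}
\item The size is $|U|=|Q^{\pm}(2r-3,q)|/(q+1)$, which is $q+1$ for $r=3$ and $q^3+1$ for $r=4$.
\item A nonzero quadratic form over $\mathbb{F}_{q^2}$ has at most $2$ zeros in $\PG(1,q^2)$ and at most $2q^2+1$ zeros in $\PG(2,q^2)$.
\item Hence $B=0$, except possibly for $r=4$, $q=2$. There equality would make $U$ a pair of lines. That would put a solid, the field reduction of a line of $\PG(2,4)$, on $Q^-(5,2)$, which contains no planes.
\end{itemize}
So $Q=h$. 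Moreover $h$ is non-degenerate, since a radical vector of $h$ would be a singular radical vector of $Q$.

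With this count added, your proof is complete. Your Step 1 (the regulus / Hermitian curve identification) then becomes superfluous, because the Hermitian nature of $U$ follows from $Q=h$. Also, the hypothesis $r\in\{3,4\}$ enters only through this count, not through the ``standard fact'' you invoke.
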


\subsection{ Coordinates in  BC representation of  $\mathrm{PG}(3, q^2)$}\label{subsec:expl}
We consider the case of $m=6$ and $n=2$.
We show how the coordinates of points in $\mathrm{PG}(3, q^2)$ relate to
the coordinates of the corresponding points in the BC representation
in $\mathrm{PG}(6, q)$.

 To avoid any confusion the coordinates of elements in $\PG(3,q^2)$ are denoted with capital letters $(X_0,X_1,X_2,X_3)=(J,X,Y,Z)$ and the ones for $\PG(6,q^2)$ with minuscule letters $(x_0,x_1,x_2,x_3,x_4,x_5,x_6)$.
 The hyperplane at infinity $\Sigma_{\infty}$ in $\PG(3,q^2)$ has equation $J=0$, whereas the hyperplane at infinity $\Pi_{\infty}$ in $\PG(6,q^2)$ has equation $x_0=0$.

Let $\epsilon\in \mathbb{F}_{q^2}\setminus \mathbb{F}_{q}$, such that $(1,\epsilon)$ is a basis of $\mathbb{F}_{q^2}$ viewed as vector space over $\mathbb{F}_{q}$.
The following  map
\[\psi:
(1,x_1+\epsilon x_2,x_3+\epsilon x_4,x_5+\epsilon x_6)\rightarrow(1,x_1,x_2,x_3,x_4,x_5,x_6)
\]
is a bijection between  affine points in $\PG(3,q^2)$ and affine points in $\PG(6,q)$.
We need to consider how the map $\psi$ acts on the lines of $\PG(3,q^2)$. This determines uniquely the spread element associated with a point of $\Sigma_{\infty}$ Thus, we group the  affine lines of
$\PG(3,q^2)$ into parallel classes of lines in $AG(3,q^2)$.
 The points in $\Sigma_{\infty}$ can be written in one of the following forms:
\begin{itemize}
    \item[type a)] $P=(0,1,k,h)$ with $k,h\in \mathbb{F}_{q^2}$,
    \item[type b)] $Q=(0,0,1,h)$ with $h\in \mathbb{F}_{q^2}$,
    \item[type c)] $P_{\infty}=(0,0,0,1)$.
\end{itemize}

We start with the $q$ odd case choosing  $\epsilon\in \mathbb{F}_{q^2}\setminus \mathbb{F}_{q}$ such that $\epsilon^q+\epsilon=0$ and  $\epsilon ^2=\delta \in \mathbb{F}_{q}$.

We compute the element of the (regular) spread $\mathcal S$ corresponding to a point written as in type a) that is,
$P(0,1,k,h)$,  by applying the  map $\psi$ to the affine lines of
$\PG(3,q^2)$ through this point.

An  affine line in $\PG(3,q^2)$ that passes through $P$ has parametric equations
\[
\ell_{b,c}=
\begin{cases}

    X=t\\
    Y=b+kt\\
    Z=c+ht
\end{cases}\;\;\;\;\;\; b,c\in \mathbb{F}_{q^2}.
\]

Write the unknowns $X=x_1+\epsilon x_2$, $Y=x_3+\epsilon x_4$,
$Z=x_5+\epsilon x_6$ and the coefficients $b=b_0+\epsilon b_1$, $c=c_0+\epsilon c_1$, $h=h_0+\epsilon h_1$, $k=k_0+\epsilon k_1$, $t=t_0+\epsilon t_1$.
Equating like powers of $\epsilon$ gives us
 the plane in $AG(6,q^2)$ corresponding to $\ell_{b,c}$:
\[
\pi_{b,c}=
\begin{cases}
        x_1=t_0\\
    x_2=t_1\\
    x_3=b_0+k_0t_0+\delta k_1t_1\\
    x_4=b_1+k_1t_0+k_0t_1\\
    x_5=c_0+h_0t_0+\delta h_1t_1\\
    x_6=c_1+h_1t_0+h_0t_1
\end{cases}
\]

As $b$ and $c$ vary over $\mathbb{F}_{q^2}$ every plane $\pi_{b,c}$ contains the  spread line
\[
r_{P}=
\begin{cases}
    x_0=0\\
    x_3=k_0x_1+\delta k_1x_2\\
    x_4=k_1x_1+k_0x_2\\
    x_5=h_0x_1+\delta h_1x_2\\
    x_6=h_1x_1+h_0x_2
\end{cases}
\]
which is independent from the choice of $b,c$.

A similar reasoning brings to the association
\[
Q=(0,0,1,h) \leftrightarrow
r_Q=
\begin{cases}
    x_0=0\\
    x_1=0\\
    x_2=0\\
    x_5=h_0x_3+\delta h_1x_4\\
    x_6=h_1x_3+h_0x_4
\end{cases}
\]
and \[
P_{\infty}(0,0,0,1) \leftrightarrow
r_{P_{\infty}}=
\begin{cases}
    x_0=0\\
    x_1=0\\
    x_2=0\\
    x_3=0\\
    x_4=0
\end{cases}
\]
The set $\{r_{R}\}_{R\in \Sigma_{\infty} }$ is a spread of $\Pi_{\infty}: x_0=0$.

For the even case we consider
$\epsilon \in \mathbb{F}_{q^2}\setminus \mathbb{F}_{q}$ and $\delta \in \mathbb{F}_{q}$ as in Subsubsection \ref{sssec:BT}. This brings some different computations in the association between points in $\PG(3,q^2)\cap \{X_0=0\}$ and lines in $\PG(6,q)\cap \{x_0=0\}$:

\[
P=(0,1,k,h)
\leftrightarrow
r_{P}=
\begin{cases}
    x_0=0\\
    x_3=k_0x_1+\delta k_1x_2\\
    x_4=k_1x_1+(k_0+k_1)x_2\\
    x_5=h_0x_1+\delta h_1x_2\\
    x_6=h_1x_1+(h_0+h_1)x_2
\end{cases}
\]
\[
Q=(0,0,1,k)
\leftrightarrow
r_{Q}=
\begin{cases}
    x_0=0\\
    x_1=0\\
    x_2=0\\
    x_5=h_0x_3+\delta h_1x_4\\
    x_6=h_1x_3+(h_0+h_1)x_4
\end{cases}
\]
\[
P_{\infty}=(0,0,0,1)
\leftrightarrow
r_{P_{\infty}}=
\begin{cases}
    x_0=0\\
    x_1=0\\
    x_2=0\\
    x_3=0\\
    x_4=0
\end{cases}\;\;\;.
\]

The BC representation translates varieties over \(\mathbb{F}_{q^2}\) into configurations in \(\mathrm{PG}(2r,q)\), enabling the use of classical tools from quadratic and linear geometry. In the next sections, we apply this framework to describe BM and BT quasi-Hermitian varieties in \(\mathrm{PG}(6,q)\).

\section{ BC representation of BM quasi-Hermitian varieties in odd characteristic}\label{sec:odd}

We consider the surface $\mathcal{B}_{a,b}$ in $\PG(3,q^2)$, $q$ odd, whose equation is:
\[
Z^qJ^q-ZJ^{2q-1}+a^q(X^{2q}+Y^{2q})-a(X^2+Y^2)J^{2q-2}=(b^q-b)(X^{q+1}+Y^{q+1})J^{q-1}
\]
where $a\in \mathbb{F}^*_{q^2}$ and $b\in \mathbb{F}_{q^2}\setminus \mathbb{F}_{q}$ and $4a^{q+1}+(b^q-b)^2\neq 0$.

The affine points of $\mathcal{B}_{a,b}$ correspond to the following set:

\begin{equation}\label{eq:def_Bab}
B_{a,b}=\{(1,x,y,z)| z+a(x^2+y^2)-b(x^{q+1}+y^{q+1})\in \mathbb{F}_q\}.
\end{equation}
We recall that $\mathcal{F}$ is the cone with equation
\[
\mathcal{F}:
\begin{cases}
J=0\\
X^{q+1}+Y^{q+1}=0
\end{cases}
\]
that lies in the hyperplane $\Sigma_{\infty}:\{J=0\}\subset \PG(3,q^2)$ and  set $\mathcal{M}_{a,b}$ in $\PG(3,q^2)$:
\[
\mathcal{M}_{a,b}= B_{a,b}\cup \mathcal{F}.
\]
Note that $\mathcal{M}_{a,b}$ and $\mathcal{B}_{a,b}$ share the same affine points.

We apply the Barlotti-Cofman representation of $\PG(3,q^2)$ in $\PG(6,q)$ explicitly defined in the previous section and describe the sets in $\PG(6,q)$ that correspond to $\mathcal{M}_{a,b}$ and $\mathcal{B}_{a,b}$.

 \subsection{BC Representation of $\mathcal{B}_{a,b}$}

We consider the basis $(1,\epsilon)$ of $\mathbb{F}_{q^2}/\mathbb{F}_{q}$ already introduced in Subsection \ref{subsec:expl} with $\epsilon^2=\delta \in \mathbb{F}_{q}$.
 We write
\[
z=x_5+\epsilon x_6; x=x_1+\epsilon x_2; y=x_3+\epsilon x_4; a=a_0+\epsilon a_1; b=b_0+ \epsilon b_1;
\]
where $x_i,a_i,b_i \in \mathbb{F}_{q}$, and substitute $x,y,z,a,b$ in the equation of $B_{a,b}$ given in \eqref{eq:def_Bab}:
\begin{equation}\label{eq1}
\begin{split}
 z+a(x^2+y^2)-b(x^{q+1}+y^{q+1})=&
 x_5+\epsilon  x_6+(a_0+\epsilon  a_1)((x_1+\epsilon  x_2)^2+(x_3+x_4)^2)\\&-(b_0+ \epsilon  b_1)((x_1+\epsilon  x_2)^{q+1}+(x_3+\epsilon  x_4)^{q+1}).
\end{split}
\end{equation}
We compute
\begin{align}\label{eq2}
(x_1+\epsilon  x_2)^2&=x_1^2+\delta x_2^2+2\epsilon  x_1x_2;\\
(x_1+\epsilon  x_2)^{q+1}&=(x_1^2+\delta  x_2^2+\epsilon  x_1^qx_2+\epsilon ^qx_1x_2^q)=x_1^2-\epsilon  x_2^2+(\epsilon +\epsilon ^q)x_1x_2=x_1^2-\delta x_2^2.
\end{align}

and substitute \eqref{eq2} in \eqref{eq1} obtaining  :

\[
x_5+\epsilon  x_6+(a_0+\epsilon  a_1)(x_1^2+\delta x_2^2+2\epsilon  x_1x_2+x_3^2+\delta x_4^2+2\epsilon  x_3x_4)-
\]
\[
-(b_0+\epsilon  b_1)(x_1^2-\delta  x_2^2+x_3^2-\delta  x_4^2) \in \mathbb{F}_{q}
\]
Asking that the above expression is in $\mathbb{F}_{q}$ corresponds to asking
\begin{equation}\label{eq:Babinpg6}
 x_6+a_0(2(x_1x_2+x_3x_4))
+a_1(x_1^2+x_3^2+\delta(x_2^2+x_4^2))
- b_1(x_1^2-\delta x_2^2+x_3^2-\delta x_4^2)=0.
\end{equation}
We deduce that the affine points in $\mathcal{B }_{a,b}$ correspond via BC representation to the points $(1,x_1,x_2,x_3,x_4,x_5,x_6)\in AG(6,q)$ that satisfy \eqref{eq:Babinpg6}.

From \cite{MP} we know that an Hermitian variety having $\PG(\mathcal{S})=\{J=0\}$ as a tangent hyperplane at a point $Y$ is represented by a quadric cone $\mathcal{H}'$ that satisfy Q1) Q2) Q3) from Subsection \ref{ssec:Q}. Neither $\mathcal{M}_{a,b}$ nor $\mathcal{B}_{a,b}$ are Hermitian varieties so we don't expect the three properties to hold but still we investigate which of them are true.
 Let $\mathcal{B}'$ be the representation of $\mathcal{B}_{a,b}$ via BC representation.
The equation of $\mathcal{B}'$ is
\begin{equation}\label{eq:B'}
 x_0x_6+a_0(2(x_1x_2+x_3x_4))+a_1(x_1^2+x_3^2+\delta(x_2^2+x_4^2))
- b_1(x_1^2-\delta x_2^2+x_3^2-\delta x_4^2)=0.
\end{equation}

This represents a quadratic cone in $\PG(6,q)$ with vertex $V=(0,0,0,0,0,1,0)$ on $ r_{P_{\infty}}$, satisfying Q1). The base of the cone is a quadric in $\PG(5,q)$ (with coordinates $[x_0,x_1,x_2,x_3,x_4,x_6]$) in line with Q2). The matrix associated to the base of $\mathcal{B}'$ is

\begin{equation}
\def\arraystretch{1.3}
A=
    \begin{blockarray}{(c|cc|cc|c)l}
        0&0&0&0&0&1\\
           \cline{1-6}
              0& 2(a_1-b_1)& 2a_0&0&0&0\\
   0& 2a_0& 2\delta(a_1+b_1)&0&0&0\\
           \cline{1-6}
              0&0&0& 2(a_1-b_1)& 2a_0&0\\
   0&0&0& 2a_0& 2\delta(a_1+b_1)&0\\
           \cline{1-6}
         1&0&0&0&0&0\\
    \end{blockarray}
\end{equation}

We use \cite[Cap. 1]{HT} to classify this quadric with the same notation:
\[
\det(A)=-16(\delta a_1^2- \delta b_1^2-a_0^2)^2; \;\; \alpha=-\det(A).
\]
Substituting $a=a_0+\epsilon a_1$ and $b=b_0+\epsilon b_1$ in $4a^{q+1}+(b^q-b)^2\neq 0$ we obtain $-4(\delta a_1^2- \delta b_1^2-a_0^2)\neq 0$ so $\det(A)\neq 0$ and the quadric is not degenerate.
Since $\alpha$ is a square, it follows that the base of $\mathcal{B}'$ is a hyperbolic quadric.

At last we study $\mathcal{B}=\mathcal{B}'\cap \{x_0=0\}$. We obtain a new quadratic cone with base the surface
\begin{equation}\label{eq:equationQ}
\mathcal{Q}:(a_1-b_1)(x_1^2+x_3^2)+\delta(a_1+b_1)(x_2^2+x_4^2)+2a_0(x_1x_2+x_3x_4)=0
\end{equation}
and with vertex the line $r_{P_{\infty}}\in \mathcal{S}$ (this is the line that according to our notation in Subsection \ref{subsec:expl} is associated to $(0,0,0,1)$).

\begin{proposition}\label{prop:linesB'}
The number of spread lines contained in $\mathcal{B}'$ depends on $q$. In particular:
\begin{itemize}
    \item if $q \equiv 1 \pmod{4}$, $\mathcal{B'}$ contains $2q^2+1$ spread lines;
    \item if $q \equiv 3 \pmod{4}$, $\mathcal{B'}$ contains $1$ spread line.
\end{itemize}
\end{proposition}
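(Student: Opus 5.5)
The plan is to reduce the count to a condition on the slope of a spread line, by passing back to $\mathbb{F}_{q^2}$-coordinates, and then solve that condition.

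\textbf{Reduction to the section at infinity.} Every element of $\mathcal{S}$ lies in $\Pi_\infty:\{x_0=0\}$. So a spread line is contained in $\mathcal{B}'$ if and only if it is contained in $\mathcal{B}=\mathcal{B}'\cap\{x_0=0\}$. This is the cone with vertex $r_{P_\infty}$ and base $\mathcal{Q}$ in \eqref{eq:equationQ}. Its equation does not involve $x_5,x_6$, so membership of a point of $\Pi_\infty$ in $\mathcal{B}$ depends only on $(x_1,x_2,x_3,x_4)$.
\begin{itemize}
\item The vertex $r_{P_\infty}$ is a spread line in $\mathcal{B}$; it accounts for the summand $1$.
\item For the other spread lines I use the explicit description in Subsection~\ref{subsec:expl}. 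Write $X=x_1+\epsilon x_2$ and $Y=x_3+\epsilon x_4$.
\item The line $r_P$ with $P=(0,1,k,h)$ consists of the points with $Y=kX$, and $(x_5,x_6)$ determined by $h$.
\item The line $r_Q$ with $Q=(0,0,1,h)$ consists of the points with $X=0$, $Y$ arbitrary.
\end{itemize}
Hence whether $r_P$ (resp.\ $r_Q$) lies in $\mathcal{B}$ is independent of $h$. Each admissible slope therefore contributes exactly $q^2$ spread lines, and the total is $1+q^2N$, where $N$ is the number of admissible slopes $k\in\mathbb{F}_{q^2}\cup\{\infty\}$.

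\textbf{Rewriting the base quadric over $\mathbb{F}_{q^2}$.} By construction, the left-hand side of \eqref{eq:equationQ} (up to a nonzero constant) is the $\epsilon$-component of $aX^2-bX^{q+1}+aY^2-bY^{q+1}$. Using $\epsilon^q=-\epsilon$, the $\epsilon$-component of $w$ is $(w-w^q)/(2\epsilon)$. So $\mathcal{Q}$ is $g(X)+g(Y)=0$, where
\[
2\epsilon\, g(X)=aX^2-a^qX^{2q}-(b-b^q)X^{q+1}.
\]
The conditions for the two kinds of slope are then as follows.
\begin{itemize}
\item For $r_Q$ we need $g(Y)=0$ for all $Y$. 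This is impossible because $a\neq 0$.
\item For $r_P$ we need $g(X)+g(kX)=0$ for all $X\in\mathbb{F}_{q^2}$, which reads
\[
a(1+k^2)X^2-a^q(1+k^{2q})X^{2q}-(b-b^q)(1+k^{q+1})X^{q+1}=0\quad\text{for all }X.
\]
\end{itemize}
For $q$ odd the exponents $2,\,2q,\,q+1$ are distinct and smaller than $q^2$. The three monomials are therefore linearly independent as functions on $\mathbb{F}_{q^2}$, and since $a\neq0$ and $b\neq b^q$ the condition is equivalent to $k^2=-1$ and $k^{q+1}=-1$.

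\textbf{Arithmetic step.} The two conditions together force $k^{q-1}=1$, i.e.\ $k\in\mathbb{F}_q$ with $k^2=-1$.
\begin{itemize}
\item If $q\equiv1\pmod 4$, then $-1$ is a square in $\mathbb{F}_q$, and both square roots satisfy $k^{q+1}=k^2=-1$. Hence $N=2$ and the count is $2q^2+1$.
\item If $q\equiv3\pmod4$, any $k$ with $k^2=-1$ lies in $\mathbb{F}_{q^2}\setminus\mathbb{F}_q$. Then $k^q=-k$, so $k^{q+1}=-k^2=1\neq-1$. Hence $N=0$ and only the vertex line remains.
\end{itemize}

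The main obstacle I expect is the bookkeeping in the second step. One has to make sure that the identification of \eqref{eq:equationQ} with $g(X)+g(Y)$ matches the normalization of $\psi$ and of $r_P$ exactly, including the factor $\delta$ and the sign conventions. If this becomes awkward, the fallback is to compute directly. Substituting $x_3=k_0x_1+\delta k_1x_2$ and $x_4=k_1x_1+k_0x_2$ into \eqref{eq:equationQ} gives a binary quadratic form in $(x_1,x_2)$. Requiring its three coefficients to vanish should reproduce $k_0=0,\ \delta k_1^2=-1$ (impossible over $\mathbb{F}_q$) or $k_1=0,\ k_0^2=-1$, which gives the same dichotomy.
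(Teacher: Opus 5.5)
Your proof is correct, and it takes a genuinely different route from the paper.

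\textbf{The paper's route.} The paper stays in the $\mathbb{F}_q$-coordinates of $\PG(6,q)$. It substitutes $x_3=\mu_0x_1+\delta\mu_1x_2$ and $x_4=\mu_1x_1+\mu_0x_2$ into \eqref{eq:equationQ} and reads off three equations in $\mu_0,\mu_1$. It eliminates to get $\delta\mu_1^2=\mu_0^2+1$. It then splits into the case $\mu_1=0$, which forces $\mu_0^2=-1$, and the case $\mu_1\neq 0$. The second case is ruled out because it would give $\delta=a_0^2/a_1^2$, contradicting that $\delta$ is a non-square.

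\textbf{Your route.} You recognize \eqref{eq:equationQ} as the $\epsilon$-component $g(X)+g(Y)$ of $aX^2-bX^{q+1}+aY^2-bY^{q+1}$. Your worry about normalization is unfounded: comparing with \eqref{eq:Babinpg6}, the identification holds with constant exactly $1$. You then use that the reduced monomials $X^2,X^{2q},X^{q+1}$ are linearly independent functions on $\mathbb{F}_{q^2}$. This gives the two conditions $k^2=-1$ and $k^{q+1}=-1$ at once, and the paper's case analysis collapses to the single observation $k^{q-1}=1$.

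\textbf{What each approach buys.} Your route is more transparent.
\begin{itemize}
\item It shows that $r_{(0,1,k,h)}\subset\mathcal{B}'$ exactly when $(0,1,k,h)$ lies both on $\mathcal{B}_\infty$ (that is, $1+k^{2q}=0$) and on $\mathcal{F}$ (that is, $1+k^{q+1}=0$). This explains conceptually the $q \bmod 4$ dichotomy and the statements of Theorems~\ref{thm:babodd} and~\ref{thm:mabodd}.
\item It needs only $a\neq 0$ and $b\notin\mathbb{F}_q$, and never uses that $\delta$ is a non-square.
\item It carries over to the even case: replace $2\epsilon$ by $\epsilon-\epsilon^q$ (which equals $1$ there). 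The same argument gives $k^2=1$, hence $k=1$, and recovers Proposition~\ref{prop:B'even} without the trace computation.
\end{itemize}
The paper's computation is more pedestrian, but it works directly with the equations in $\PG(6,q)$ used throughout that section.

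\textbf{A slip in your fallback remark.} This does not affect your main argument. The branch $k_0=0$, $\delta k_1^2=-1$ is not impossible over $\mathbb{F}_q$ by itself when $q\equiv 3\pmod 4$. In that case $-1$ and $\delta$ are both non-squares, so $-1/\delta$ is a square and $k_1$ exists. What excludes this branch is the remaining condition $k^{q+1}=k_0^2-\delta k_1^2=-1$, which with $k_0=0$ gives $\delta k_1^2=1$. Your main argument already handles this correctly via $k^{q-1}=1$.
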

\begin{proof}
  We already know that $r_{P_{\infty}}$ is entirely contained in $\mathcal{B}'$ so we focus on when $r_{P}\subset \mathcal{B}'$ for $P\in \{J=0\}\setminus P_{\infty}$. According to the study carried in Subsection \ref{subsec:expl}
  we need to determine which spread lines of type
    $r_{(0,1,\alpha,t)}$ and $r_{(0,0,1,t)}$ with $\alpha,t\in \mathbb{F}_{q^2}$ are entirely contained in $\mathcal{B}'$. 
  A direct computation shows that no spread lines of type $r_{(0,0,1,t)}$ are entirely contained in $\mathcal{B}'$.
Substituting the generic point of the line
\[
r_{(0,1,\mu,t)}=
\begin{cases}
x_0=0\\
x_3=\mu_0 x_1+\delta \mu_1x_2\\
x_4=\mu_1 x_1+\mu_0x_2\\
x_5=t_0x_1+\delta t_1x_2\\
x_6=t_1x_1+t_0x_2
\end{cases}
\]
in equation \eqref{eq:equationQ} we obtain
\[
[(a_1-b_1)(1+\mu_0^2)+\delta(a_1+b_1)\mu_1^2+2a_0\mu_0\mu_1]x_1^2+\]
\[[\delta^2(a_1-b_1)\mu_1^2+\delta(a_1+b_1)(1+\mu_0^2)+2a_0 \delta\mu_0\mu_1]x_2^2+\]
\[2x_1x_2[(\delta(a_1-b_1)\mu_0\mu_1+\delta(a_1+b_1))\mu_0\mu_1+a_0(\delta\mu_1^2+\mu_0^2+1)]=0
\]
which is an identity
if and only if

\[\begin{cases}
(a_1-b_1)(1+\mu_0^2)+\delta(a_1+b_1)\mu_1^2+2a_0\mu_0\mu_1=0\\
\delta^2(a_1-b_1)\mu_1^2+\delta(a_1+b_1)(1+\mu_0^2)+2a_0 \delta\mu_0\mu_1=0\\
2\delta a_1\mu_0\mu_1+a_0(\delta\mu_1^2+\mu_0^2+1)=0
\end{cases}.
\]
Multiplying the first equation by $\delta$ and subtracting it from the second one, we obtain
\[2\delta b_1(1+\mu_0^2)-2\delta^2 b_1\mu_1^2=0. \]
That is, $\delta \mu_1^2=\mu_0^2+1$  and from the third equation of the previous system we get $2\delta a_0\mu_1^2+2\delta a_1\mu_0\mu_1=0$.

Hence if $\mu_1=0$ and $\mu_0^2=-1$ (which is possible only when $q\equiv 1 \mod 4$) 
then the $2q^2$
spread lines
$r_{(0,1,\pm\mu_0,t)}$, $t\in \mathbb{F}_{q^2}$ are contained in $\mathcal{B}'$.
Assume now that $\mu_1 \neq 0$.
Substituting $\delta \mu_1^2=\mu_0^2+1$ in the second equation of the system we obtain $\delta a_1 \mu_1+a_0 \mu_0=0$, that is $\delta a_1=-a_0(\mu_0/\mu_1)=a_0^2/a_1$ and hence
$\delta=a_0^2/a_1^2$ which is impossible.
Therefore for $q\equiv 1$ $\pmod{4}$, the cone contains $2q^2+1$ spread lines whereas for $q\equiv 3$ $\pmod{4}$ the cone contains only one spread line.

\end{proof}

As we expected, property Q3) is not satisfied as even $2q^2+1$ are not enough to cover $\mathcal{B}'$.

We now determine the BC representation of the point set at infinity
$\mathcal{B}_\infty = \mathcal{B}_{a,b} \cap \{J=0\}$.
From the defining equation of $\mathcal{B}_{a,b}$ we have
\[
\mathcal{B}_\infty=
\{J=0,\; X^{2q}+Y^{2q}=0\}
=
\{J=0,\; Y=\mu X\} \cup \{J=0,\; Y=-\mu X\},
\]
where $\mu^2=-1$ in $\mathbb{F}_{q^2}$.
Hence $\mathcal{B}_\infty$ consists of two lines meeting at $P_\infty$.

Under the BC representation, each point
$(0,1,\pm\mu,t)$ with $t \in \mathbb{F}_{q^2}$
corresponds to a spread line of type
$r(0,1,\pm\mu,t)$, together with the line $r_{P_\infty}$.
Therefore, the  points at infinity of $B_{a,b}$
are represented by $2q^2+1$ spread lines.

In particular, the containment of these spread lines in the cone $B'$
depends on $q \bmod 4$: if $q \equiv 1 \pmod 4$ then all the
$2q^2+1$ spread lines lie on $B'$,
whereas if $q \equiv 3 \pmod 4$ only the line $r_{P_\infty}$
is contained in $B'$.

We summarize the above discussion in the following theorem.

\begin{theorem}\label{thm:babodd}
    
Let $\mathcal{B}_{a,b}$ be the variety in $\PG(3,q^2)$, with $q$ odd.
In the BC representation in $\PG(6,q)$, the affine points of $\mathcal{B}_{a,b}$ correspond to the affine points of the quadratic cone $\mathcal{B'}$ with equation
 \[x_0x_6+a_0(2(x_1x_2+x_3x_4))+a_1(x_1^2+x_3^2+\delta(x_2^2+x_4^2))
- b_1(x_1^2-\delta x_2^2+x_3^2-\delta x_4^2)=0
\]
whose base is a hyperbolic quadric and whose vertex is $V=(0,0,0,0,0,1,0)$.

The BC representation of $\mathcal{B}_{a,b}$ is obtained by adjoining to these affine points the spread lines corresponding to the points of $\mathcal{B}_{a,b}\cap{J=0}$.

More precisely, the points at infinity of $\mathcal{B}_{a,b}$ correspond to $2q^2+1$ spread lines of $\mathcal{S}$. Among these lines:
\begin{itemize}
\item if $q \equiv 1 \pmod{4}$, all $2q^2+1$ spread lines are contained in $\mathcal{B'}$;
\item if $q \equiv 3 \pmod{4}$, exactly one spread line is contained in $\mathcal{B'}$.
\end{itemize}
\end{theorem}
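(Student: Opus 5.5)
The theorem collects the computations carried out above, so the plan is to assemble three pieces in order: the affine equation, the classification of the base quadric, and the spread lines at infinity.

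\emph{Affine part.} Take an affine point $(1,x,y,z)$ of $\PG(3,q^2)$. Write $x=x_1+\epsilon x_2$, $y=x_3+\epsilon x_4$, $z=x_5+\epsilon x_6$, with $\epsilon^q=-\epsilon$ and $\epsilon^2=\delta$. Under this substitution we have $x^{q+1}=x_1^2-\delta x_2^2$, and $x^2$ expands as in \eqref{eq2}. The membership condition in \eqref{eq:def_Bab} requires a certain element to lie in $\mathbb{F}_q$. That holds exactly when its $\epsilon$-component vanishes, and this component is the left side of \eqref{eq:Babinpg6}. Since $\psi$ is a bijection on affine points, homogenizing with $x_0$ gives \eqref{eq:B'}. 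Hence the affine points of $\mathcal{B}_{a,b}$ correspond exactly to the affine points of $\mathcal{B}'$.

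\emph{Shape of $\mathcal{B}'$.} The coordinate $x_5$ does not occur in \eqref{eq:B'}, so $\mathcal{B}'$ is a cone with vertex $V=(0,0,0,0,0,1,0)$. Its base is the quadric of $\PG(5,q)$ with matrix $A$. We have $\det A=-16(\delta a_1^2-\delta b_1^2-a_0^2)^2$. The hypothesis $4a^{q+1}+(b^q-b)^2\neq0$ translates into $\delta a_1^2-\delta b_1^2-a_0^2\neq0$, so the base is non-degenerate. By the classification in \cite{HT}, it is hyperbolic because $\alpha=-\det A$ is a nonzero square in $\mathbb{F}_q$. The point to check here is the sign convention for $\alpha$ in dimension $5$: since $-\det A=16(\cdots)^2$, it is indeed a square, which gives $Q^+(5,q)$.

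\emph{Points at infinity.} Set $J=0$ in the homogeneous equation of $\mathcal{B}_{a,b}$. This leaves $a^q(X^{2q}+Y^{2q})=0$, equivalently $X^2+Y^2=0$. This set is the two lines $Y=\pm\mu X$ with $\mu^2=-1$, which pass through $P_\infty$; note $\mu$ always exists in $\mathbb{F}_{q^2}$. Counting points, it is $2q^2+1$ points: $P_\infty$ together with the points $(0,1,\pm\mu,t)$, $t\in\mathbb{F}_{q^2}$. Each of these points corresponds to one spread line, as described in Subsection~\ref{subsec:expl}.

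The containment statement is where the dependence on $q$ enters, and it requires one extra observation. If $q\equiv1\pmod 4$, then $\mu\in\mathbb{F}_q$, so $\mu_1=0$ and $\mu_0^2=-1$. Proposition~\ref{prop:linesB'} shows that exactly these lines lie in $\mathcal{B}'$, so all $2q^2+1$ lines are contained in it. If $q\equiv3\pmod4$, then $\mu\notin\mathbb{F}_q$. Proposition~\ref{prop:linesB'} then says that only $r_{P_\infty}$ lies in $\mathcal{B}'$, which gives exactly one line.

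The main obstacle is that final step: matching the lines found in Proposition~\ref{prop:linesB'} with the lines actually coming from $\mathcal{B}_\infty$. When $q\equiv1\pmod4$ the two sets coincide, but this has to be checked rather than assumed.
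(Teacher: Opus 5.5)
Your proposal is correct and follows essentially the same route as the paper: you read off \eqref{eq:B'} from the $\epsilon$-component, classify the base via $\det A$ and $\alpha=-\det A$ as in \cite{HT}, compute $\mathcal{B}_\infty$ as the two lines $Y=\pm\mu X$ through $P_\infty$, and invoke Proposition~\ref{prop:linesB'} for the containment. The matching step you flag as the main obstacle is already settled by the argument you give, because the proof of Proposition~\ref{prop:linesB'} shows that the contained spread lines are exactly $r_{P_\infty}$ and the lines $r_{(0,1,\mu,t)}$ with $\mu_1=0$ and $\mu_0^2=-1$, and these are precisely the lines over $\mathcal{B}_\infty$ when $q\equiv 1\pmod 4$.
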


\subsection{ BC representation of $\mathcal{M}_{ab}$ }
The BC representation of the affine points in $\mathcal{M}_{ab}$ ($=B_{ab}$) has already been studied and satisfies the equation \eqref{eq:Babinpg6}.

We continue with the study of the BC representation  of the set $\mathcal{F}$ of  points at infinity in $\mathcal{M}_{ab}$.

According to the previous section, to every point in $\Sigma_{\infty}\subset \PG(3,q^2)$ corresponds a spread line.  $\mathcal{F}\subset \PG(3,q^2)$ is the union of $q+1$ lines intersecting in $P_{\infty}=(0,0,0,1)$, the $q+1$ lines satisfy
\begin{equation}\label{eq:linesF}
\ell_{\alpha}=\begin{cases}
J=0\\
\alpha X^{}+Y^{}=0
\end{cases}
\end{equation}
where $\alpha$ ranges among the solutions of $x^{q+1}=-1$.
The line associated to $P_{\infty}$ via BC representation is
\[
r_{P_{\infty}}=
\begin{cases}
x_1=0\\
x_2=0\\
x_3=0\\
x_4=0\\
x_0=0
\end{cases}.
\]
Take now a generic point in $\ell_{\alpha}\setminus \{P_{\infty}\}$, its coordinates are $(0,1,\alpha,t)$ where $t\in \mathbb{F}_{q^2}$.
The line associated to said point has the following equations in $\{x_0=0\}\subset \PG(6,q)$:
\[
r_{(0,1,\alpha,t)}=
\begin{cases}
x_0=0\\
x_3=\alpha_0x_1+\delta \alpha_1x_2\\
x_4=\alpha_1x_1+\alpha_0x_2\\
x_5=t_0x_1+\delta t_1x_2\\
x_6=t_1x_0+t_0x_2
\end{cases},
\]
where $\alpha=\alpha_0+\epsilon\alpha_1$, $t=t_0+\epsilon t_1$ and $\alpha_i,t_i\in \mathbb{F}_{q}$.

Next we show the following proposition:
\begin{prop}\label{prop:lines_partition}
The set of lines $\{r_{P}\}_{P\in \mathcal{F}}$ forms a partition of the set

\begin{equation}\label{eq:effe}
F=
\begin{cases}
x_0=0\\
x_1^2-\delta x_2^2+x_3^2-\delta x_4^2=0
\end{cases}.
\end{equation}
\end{prop}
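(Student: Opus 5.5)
The key observation is that the quadratic form defining $F$ is the $\mathbb{F}_q$-linear expression of the norm: since $\epsilon^q=-\epsilon$ and $\epsilon^2=\delta$, we have $(x_1+\epsilon x_2)^{q+1}=x_1^2-\delta x_2^2$, as already computed in \eqref{eq2}. Hence for a point of $\Pi_\infty=\{x_0=0\}$ with coordinates $(0,x_1,\dots,x_6)$, setting $X=x_1+\epsilon x_2$ and $Y=x_3+\epsilon x_4$, the condition defining $F$ reads $X^{q+1}+Y^{q+1}=0$. The plan is to show that a point of $\Pi_\infty$ lies on $F$ if and only if the spread line through it is $r_P$ for some $P\in\mathcal{F}$. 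Since $\{r_R\}_{R\in\Sigma_\infty}$ is a spread of $\Pi_\infty$, the lines $r_P$ are pairwise disjoint, so this gives the partition.

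First, treat $P_\infty$. Its spread line $r_{P_\infty}$ is $x_1=x_2=x_3=x_4=0$, which clearly lies in $F$.

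Next, take $P=(0,1,\alpha,t)\in\ell_\alpha\setminus\{P_\infty\}$ with $\alpha^{q+1}=-1$. A generic point of $r_P$ has $(x_1,x_2)$ arbitrary and $x_3+\epsilon x_4=\alpha(x_1+\epsilon x_2)$. Indeed, $(\alpha_0+\epsilon\alpha_1)(x_1+\epsilon x_2)=(\alpha_0x_1+\delta\alpha_1x_2)+\epsilon(\alpha_1x_1+\alpha_0x_2)$, which matches the equations of $r_{(0,1,\alpha,t)}$. So $Y=\alpha X$, and therefore $X^{q+1}+Y^{q+1}=X^{q+1}(1+\alpha^{q+1})=0$. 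Thus $r_P\subset F$.

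Conversely, let $R=(0,x_1,\dots,x_6)\in F$. If $X=Y=0$, then $R\in r_{P_\infty}$. If $X\neq 0$, put $\alpha=Y/X$. Then $\alpha^{q+1}=-1$ follows from the norm equation, and $t=Z/X$ with $Z=x_5+\epsilon x_6$. The same multiplication identity shows that $R\in r_{(0,1,\alpha,t)}$, and this point lies on $\ell_\alpha\subset\mathcal{F}$. If $X=0$ but $Y\neq 0$, then $Y^{q+1}=0$, which is impossible. This also confirms that no points of type b) other than $P_\infty$ lie in $\mathcal{F}$.

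As a consistency check, count points. $F$ is a cone with vertex the line $r_{P_\infty}$ over an elliptic quadric. In the variables $x_1,\dots,x_4$ the form is a sum of two anisotropic binary forms of equal discriminant, i.e.\ a $Q^+(3,q)$, which has $(q+1)^2$ points. So $|F|=(q+1)^2q^2+q+1$. On the other side, $|\mathcal{F}|=(q+1)q^2+1$ spread lines, each with $q+1$ points, give the same total.

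The main obstacle is only bookkeeping. One must correctly verify that the explicit equations of $r_P$ are exactly $Y=\alpha X$ and $Z=tX$ in the $\epsilon$-basis, and note that the displayed equation for $x_6$ has a typo ($x_0$ should be $x_2$).
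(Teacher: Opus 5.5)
Your proof is correct and follows essentially the paper's argument. The paper also checks $r_{(0,1,\alpha,t)}\subset F$ using $\alpha^{q+1}=-1$, and for the converse it uses that $\delta$ is a non-square (your anisotropy of the norm form) to rule out $X=0\neq Y$, then recovers $\alpha$ as a ratio of the $\mathbb{F}_{q^2}$-coordinates. Your formulation $Y=\alpha X$, $Z=tX$ is a cleaner version of its coordinate computation, which has some slips (e.g.\ it takes $\alpha=X/Y$ where $Y/X$ is meant). The only slip on your side is in the optional count: the base of the cone $F$ is the hyperbolic quadric $Q^+(3,q)$, not an elliptic one, as your own discriminant remark and the value $(q+1)^2$ already show.
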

\begin{proof}
We start showing that each line $r_{P}$ is contained in $F$.
This is clearly true for $r_{P_{\infty}}$ so we only check it for $P_{(0,1,\alpha,t)}$. From the first two equations of
$r_{(0,1,\alpha,t)}$ we obtain
\[
x_1=-(\alpha_0x_3-\delta\alpha_1x_4);
\;\;\; x_2=-(\alpha_1x_3-\alpha_0x_4)
\]
and find out that
\[
x_1^2-\delta x_2^2=(\alpha_0^2-\delta\alpha_1^2)x_3-\delta(\alpha_0^2-\delta \alpha_1^2)x_4
\]
from the fact that $\alpha_0^2+\delta\alpha_1^2=\alpha^{q+1}=-1$ we find that
$r_{P}$ satisfies the equations in \eqref{eq:effe}.

Now we show that every point in $F$ belongs to one of the lines $r_{P}$, $P\in\mathcal{F}$ so let $Q$ be any point in $F$ with coordinates $[q_0=0,q_1,\ldots,q_6]$. From our choice of $\epsilon$ we have that $\delta$ is not a square in $\mathbb{F}_{q}$ and therefore if $q_1=q_2=0$ then $q_3=q_4=0$ and vice versa. In the case of $q_1=\ldots=q_4=0$ we have that $Q\in r-{P_{\infty}}$. In the case of $(q_1,q_2)\neq (0,0)\neq (q_3,q_4)$ we consider $Q_1=q_1+\delta q_2$ and $Q_2=q_3+\delta q_4$ and notice that the second equation in \eqref{eq:effe} implies that $Q_1^{q+1}+Q_2^{q+1}=0$. We take $\alpha=Q_1/Q_2$ and we conclude that $Q$ is an element in $r_{(0,1,\alpha,t)}$ where $t=q_5+\delta q_6$.
\end{proof}

In conclusion
\begin{theorem}\label{thm:mabodd}
Let $\mathcal{M}_{a,b}$ be the BM quasi-Hermitian variety in $\PG(3,q^2)$, with $q$ odd. 
In $\PG(6,q)$ its affine points correspond to the affine points of the quadratic cone $\mathcal{B}'$ defined by
\[
x_0x_6 
+ 2a_0(x_1x_2+x_3x_4)
+ a_1(x_1^2+x_3^2+\delta(x_2^2+x_4^2))
- b_1(x_1^2-\delta x_2^2+x_3^2-\delta x_4^2)
=0.
\]
The cone $\mathcal{B}'$ has vertex 
$
V=(0,0,0,0,0,1,0)
$
and its base is a hyperbolic quadric.

The points at infinity of $\mathcal{M}_{a,b}$ correspond to $q^3+q^2+1$ spread lines of $\PG(6,q)$, which partition the set
\[
F=
\begin{cases}
x_0=0,\\
x_1^2-\delta x_2^2+x_3^2-\delta x_4^2=0.
\end{cases}
\]
Among these lines, $2q^2+1$ are contained in $\mathcal{B}'$ if $q\equiv 1 \pmod{4}$, whereas exactly one spread line is contained in $\mathcal{B}'$ if $q\equiv 3 \pmod{4}$.
\end{theorem}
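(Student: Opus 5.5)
The plan is to assemble Theorem~\ref{thm:mabodd} from the pieces already established, with only the count of spread lines requiring a new computation. First, since $\mathcal{M}_{a,b}$ and $\mathcal{B}_{a,b}$ share the same affine points, namely $B_{a,b}$ in \eqref{eq:def_Bab}, the description of the affine part follows directly from the derivation of \eqref{eq:Babinpg6}. Homogenizing with $x_0$ gives the cone $\mathcal{B}'$ of \eqref{eq:B'}. Its vertex $V=(0,0,0,0,0,1,0)$ is read off because $x_5$ does not occur in the equation. That the base is a non-degenerate hyperbolic quadric is exactly the computation $\det(A)=-16(\delta a_1^2-\delta b_1^2-a_0^2)^2$ carried out before Proposition~\ref{prop:linesB'}. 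Non-degeneracy uses condition \eqref{cond:BM} for $r=3$, and $\alpha=-\det(A)$ being a square gives the hyperbolic type. So this part is simply a citation of Theorem~\ref{thm:babodd}.

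Second, the points at infinity of $\mathcal{M}_{a,b}$ are the points of $\mathcal{F}$. By \eqref{eq:linesF}, $\mathcal{F}$ is the union of the $q+1$ lines $\ell_\alpha$ through $P_\infty$, with $\alpha^{q+1}=-1$. Counting points gives $(q+1)q^2+1=q^3+q^2+1$. Each point corresponds to exactly one spread line, giving $q^3+q^2+1$ spread lines, and Proposition~\ref{prop:lines_partition} shows that these lines partition $F$.

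Third, we determine which of these lines lie in $\mathcal{B}'$. Proposition~\ref{prop:linesB'} already classifies all spread lines contained in $\mathcal{B}'$:
\begin{itemize}
\item $r_{P_\infty}$, which always lies in $\mathcal{B}'$;
\item the lines $r_{(0,1,\mu,t)}$ with $\mu_1=0$ and $\mu_0^2=-1$, which exist only when $q\equiv1\pmod 4$.
\end{itemize}
It then suffices to check that these $2q^2+1$ lines belong to $\{r_P\}_{P\in\mathcal{F}}$. For $\mu=\mu_0\in\mathbb{F}_q$ with $\mu_0^2=-1$, we have $\mu^{q+1}=\mu_0^2=-1$, so $\mu$ is one of the admissible $\alpha$. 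Hence the points $(0,1,\pm\mu_0,t)$ lie on $\ell_{\pm\mu_0}\subset\mathcal{F}$. Conversely, any spread line in $\mathcal{B}'$ is among these, so the count is $2q^2+1$ when $q\equiv1\pmod4$. When $q\equiv3\pmod4$, only $r_{P_\infty}$ lies in $\mathcal{B}'$.

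The main obstacle is making sure the exhaustive classification in Proposition~\ref{prop:linesB'} is airtight, since the final count relies on it. In particular, that proof dismisses the lines $r_{(0,0,1,t)}$ by "a direct computation". To close this gap, I would substitute the generic point of $r_{(0,0,1,t)}$ (so $x_1=x_2=0$) into \eqref{eq:equationQ}. This gives $(a_1-b_1)x_3^2+\delta(a_1+b_1)x_4^2+2a_0x_3x_4$. For this to vanish identically we need $a_0=0$ and $a_1=b_1=-b_1$, forcing $b_1=0$, which contradicts $b\notin\mathbb{F}_q$. Any such line would also fail to lie in $\mathcal{F}$, since $X=0$ forces $Y=0$ there, so it would not affect the count in any case.
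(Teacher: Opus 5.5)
Your proposal is correct and follows the paper's own route. You assemble the theorem from the cone equation \eqref{eq:B'} and the determinant of $A$, from the point count of $\mathcal{F}$ combined with Proposition~\ref{prop:lines_partition}, and from the classification in Proposition~\ref{prop:linesB'}; your explicit checks that the contained lines $r_{(0,1,\pm\mu_0,t)}$ come from points of $\mathcal{F}$ (since $\mu_0^{q+1}=\mu_0^2=-1$) and that no $r_{(0,0,1,t)}$ lies in $\mathcal{B}'$ fill in steps the paper leaves implicit.
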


\section{ BC representation of BM quasi-Hermitian varieties in even characteristic}\label{sec:even}
In Section \ref{sec:odd} we studied the BC representation of the surfaces $\mathcal{B}_{a,b}$ and $\mathcal{M}_{a,b}$ in the case of $q$ odd, here we plan to do the same work for the case  of  $q>2$ even. Let $q$ be even throughout.

\subsection{BC representation of $\mathcal{B}_{ab}$ }

We write \[
z=x_5+\epsilon x_6; x=x_1+\epsilon x_2; y=x_3+\epsilon x_4; a=a_0+\epsilon a_1; b=b_0+ \epsilon b_1.
\]
where $x_i,a_i,b_i\in \mathbb{F}_{q}$ and, as in Subsection \ref{subsec:expl}, $\epsilon\in \mathbb{F}_{q^2}\setminus \mathbb{F}_{q}$ and $\delta$ as in \ref{sssec:BT}. We substitute $x,y,z,a,b$ in the equation for $B_{a,b}$ \eqref{eq:def_Bab} obtaining:
\[
x^2=(x_1+\epsilon x_2)^2=x_1^2+\epsilon^2 x_2^2=x_1^2+\epsilon x_2^2+\delta x_2^2;
\]
\[
x^{q+1}=(x_1+\epsilon x_2)(x_1+\epsilon x_2)^{q}=
x_1^2+(\epsilon^q+\epsilon)x_1x_2+\epsilon^{q+1}=x_1^2+\delta x_2^2+x_1x_2.
\]
and
\[
x_5+\epsilon x_6+(a_0+\epsilon a_1)(x_1^2+\epsilon x_2^2+\delta x_2^2+x_3^2+ \epsilon x_4^2+\delta x_4^2)+(b_0+\epsilon b_1)(x_1^2+\delta x_2^2+x_1x_2+x_3^2+\delta x_4^2+x_3x_4).
\]

Imposing that the latter is in $\mathbb{F}_{q}$ we have:
\begin{equation}\label{eq:puntialfini}
x_6+a_0(x_2^2+x_4^2)+a_1(x_1^2+x_2^2+\delta x_2^2+x_3^2+ x_4^2+\delta x_4^2)+b_1(x_1^2+\delta x_2^2+x_1x_2 +x_3^2+\delta x_4^2+x_3x_4)=0.
\end{equation}

In accordance with property Q1) we see that \eqref{eq:puntialfini} is the equation of a quadric cone $\mathcal{B}'$ in $\PG(6,q)$ with vertex $V=(0,0,0,0,0,1,0)$:
\[
x_0x_6+a_0(x_2^2+x_4^2)+a_1(x_1^2+x_2^2+\delta x_2^2+x_3^2+ x_4^2+\delta x_4^2)+b_1(x_1^2+\delta x_2^2+x_1x_2 +x_3^2+\delta x_4^2+x_3x_4)=0.
\]

We interpret \eqref{eq:puntialfini} as the equation of the base of the cone in $\PG(5,q)$. We study the matrix $A$ associated to the base of the cone (following \cite[Cap. 1]{HT}):

\begin{equation}
\def\arraystretch{1.3}
    \begin{blockarray}{(c|cc|cc|c)l}
        0&0&0&0&0&1\\
           \cline{1-6}
              0&2(a_1+b_1)& b_1&0&0&0\\
   0& b_1& 2(a_0+(1+\delta)a_1+\delta b_1)&0&0&0\\
           \cline{1-6}
              0&0&0&2(a_1+b_1)& b_1&0\\
   0&0&0&b_1&2(a_0+(1+\delta)a_1+\delta b_1)&0\\
           \cline{1-6}
         1&0&0&0&0&0\\
    \end{blockarray}
\end{equation}

we deduce that
\[
\det(A)=-(4(a_1+b_1)(a_0+(1+\delta)a_1+b_1\delta)-b_1^2)^2
\]
meaning that $\det(A)=-b_1^4\neq 0$. We consider a second matrix $B$ of the same size as $A$, defined by $b_{i,i}=0$ and $b_{ji}=-b_{ij}=-a_{ij}$ for $i<j$.
We obtain $\det(B)=-b_1^4$. Following \cite[Theorem 1.2]{HT} we compute
\begin{align*}
\alpha&=\frac{\det(B)-\det(A)}{4\det(B)}=\\
&=\frac{4(a_1+b_1)^2(a_0+(1+\delta)a_1+b_1\delta)^2+2((a_1+b_1)(a_0+(1+\delta)a_1+b_1\delta))b_1^2}{-b_1^2}
\end{align*}
Interpreting the formula for $\alpha$ as a rational function over $\mathbb{Z}$ in indeterminates and then specialising to $\mathbb{F}_q$ as in \cite[page 4]{HT}, we obtain $\alpha=0$. 
Since $\det(A)\neq 0$  and $\operatorname{tr}(\alpha)=0$, it follows that   the base of the cone $\mathcal{B}'$ is a non degenerate hyperbolic quadric, satisfying therefore property Q2). We now study the  points at infinity of $\mathcal{B}'$:

\begin{equation}\label{eq:even}
  \mathcal{B'}\cap \{x_0=0\}:  (a_0+a_1)(x_2^2+x_4^2)+(a_1+b_1)(x_1^2+\delta x_2^2+x_3^2+\delta x_4^2)+b_1(x_1x_2+x_3x_4)=0.
\end{equation}

Similarly to the odd case, \eqref{eq:even} the variety represents a quadric cone with vertex the line $r_{P_{\infty}}\in \mathcal{S}$.
 We study the equation of the other spread lines contained in $\mathcal{B}'$:

\begin{proposition}\label{prop:B'even}
The number of spread lines contained in $\mathcal{B}'$ for $q$ even is $q^2+1$.
\end{proposition}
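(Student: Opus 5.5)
The plan follows the odd-characteristic proof of Proposition~\ref{prop:linesB'}, but works in $\mathbb{F}_{q^2}$-language so that the conditions become transparent. The line $r_{P_\infty}$ is the vertex of the cone \eqref{eq:even}, so it lies in $\mathcal{B}'$. It remains to decide which lines $r_{(0,0,1,t)}$ and $r_{(0,1,\mu,t)}$ lie in $\mathcal{B}'\cap\{x_0=0\}$. I expect the answer to be exactly the $q^2$ lines $r_{(0,1,1,t)}$, $t\in\mathbb{F}_{q^2}$. This matches the fact that $\mathcal{B}_\infty=\{J=0,\;X^{2q}+Y^{2q}=0\}=\{J=0,\;(X+Y)^{2q}=0\}$ is a single line through $P_\infty$ when $q$ is even, which gives $q^2+1$ lines in total.

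The key observation is that the left-hand side of \eqref{eq:even} is $Q(x_1,x_2)+Q(x_3,x_4)$. Here $Q(u_1,u_2)$ is the $\epsilon$-component, in the basis $(1,\epsilon)$, of $a u^2+b u^{q+1}$ with $u=u_1+\epsilon u_2$. This is how \eqref{eq:puntialfini} was derived, and the same holds for the part not involving $x_5,x_6$. The variables $x_5,x_6$ do not appear in \eqref{eq:even}, so containment of a spread line does not depend on $t$.

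For a line $r_{(0,1,\mu,t)}$ the points satisfy $y=\mu x$ (with $x=x_1+\epsilon x_2$, $y=x_3+\epsilon x_4$). Containment therefore means that the $\epsilon$-part of
\[
a(1+\mu^2)x^2+b(1+\mu^{q+1})x^{q+1}=a(1+\mu)^2x^2+b(1+\mu^{q+1})x^{q+1}
\]
vanishes for all $x\in\mathbb{F}_{q^2}$. For a line $r_{(0,0,1,t)}$ we have $x=0$, and the condition becomes that the $\epsilon$-part of $a y^2+b y^{q+1}$ vanishes for all $y$.

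The main step is a small lemma. If $c\in\mathbb{F}_{q^2}$ and $d_1\in\mathbb{F}_q$ satisfy $\mathrm{pr}_\epsilon(c x^2)+d_1 x^{q+1}=0$ for all $x$, then $c=0$ and $d_1=0$. To prove it, note that Frobenius is bijective, so we may write $w=x^2$. Then $w\mapsto \mathrm{pr}_\epsilon(cw)$ is $\mathbb{F}_q$-linear, while $w\mapsto d_1 N(x)=d_1 N(w)^{1/2}$ is, up to the Frobenius on $\mathbb{F}_q$, the anisotropic norm form. A linear map can coincide with such a map only if both vanish. Concretely, I would compare the values at $w$ and at $\lambda w$ for $\lambda\in\mathbb{F}_q$: the linear side scales by $\lambda$ and the norm side by $\lambda^{2}$ after taking square roots, and I expect this step to need the most care. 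Once the norm side is known to vanish, injectivity of $w\mapsto cw$ forces $c=0$ as well.

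Applying the lemma with $d_1=b_1$ settles type b), because $b\notin\mathbb{F}_q$ gives $b_1\neq0$. For type a), the lemma gives $a(1+\mu)^2=0$, hence $\mu=1$ since $a\neq0$. Conversely, $\mu=1$ makes both terms vanish, because $1+\mu^{q+1}=0$ in characteristic $2$. Hence exactly the $q^2$ lines $r_{(0,1,1,t)}$ lie in $\mathcal{B}'$, and adding $r_{P_\infty}$ gives $q^2+1$.
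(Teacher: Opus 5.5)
Your reduction is correct and is cleaner than the paper's route. The left side of \eqref{eq:even} is indeed $Q(x_1,x_2)+Q(x_3,x_4)$ with $Q(u)=\mathrm{pr}_\epsilon(au^2)+b_1N(u)$. The line $r_{(0,1,\mu,t)}$ projects onto $\{y=\mu x\}$. Containment is therefore equivalent to $\mathrm{pr}_\epsilon\bigl(a(1+\mu)^2x^2\bigr)+b_1\bigl(1+N(\mu)\bigr)N(x)=0$ for all $x$. The paper instead expands in coordinates and imposes three coefficient equations. It excludes $\mu_1\neq 0$ through a $2\times 2$ determinant $a_0^2+a_0a_1+\delta a_1^2$, which is just $N(a)$; your condition $a(1+\mu)^2=0$ explains this directly.

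The gap is in your proof of the key lemma. The mechanism you propose, comparing values at $w$ and at $\lambda w$ with $\lambda\in\mathbb{F}_q$, cannot work. Since $N(\lambda w)=\lambda^{q+1}N(w)=\lambda^2N(w)$, we get $N(\lambda w)^{1/2}=\lambda N(w)^{1/2}$, which scales by $\lambda$ exactly like the linear side. Equivalently, in the variable $x$ both $x\mapsto\mathrm{pr}_\epsilon(cx^2)$ and $x\mapsto d_1N(x)$ are $\mathbb{F}_q$-quadratic forms, both multiplied by $\lambda^2$ under $x\mapsto\lambda x$. So no homogeneity comparison can separate them.

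What separates them is additivity, or zeros. Any one of the following closes the gap:

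\begin{itemize}
\item \emph{Zeros.} The map $w\mapsto\mathrm{pr}_\epsilon(cw)$ is $\mathbb{F}_q$-linear from the $2$-dimensional space $\mathbb{F}_{q^2}$ to $\mathbb{F}_q$, so it kills some $w_0=x_0^2\neq 0$. Then $d_1N(x_0)=0$ with $x_0\neq 0$, and anisotropy of $N$ gives $d_1=0$.
\item \emph{Additivity.} The map $x\mapsto\mathrm{pr}_\epsilon(cx^2)$ is additive in characteristic $2$. But $N(x+y)+N(x)+N(y)=x^qy+xy^q$ equals $1$ at $x=\epsilon$, $y=1$ (since $\epsilon^q=\epsilon+1$), which forces $d_1=0$.
\item \emph{Coordinates.} We have $\mathrm{pr}_\epsilon(cx^2)=c_1x_1^2+(c_0+c_1+\delta c_1)x_2^2$, which has no $x_1x_2$ term, while $d_1N(x)=d_1(x_1^2+x_1x_2+\delta x_2^2)$ does. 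A binary quadratic form vanishing on all of $\mathbb{F}_q^2$ has zero coefficients (evaluate at $(1,0)$, $(0,1)$, $(1,1)$).
\end{itemize}

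Once $d_1=0$, your last step ($c\,\mathbb{F}_{q^2}\subseteq\mathbb{F}_q$ forces $c=0$) is fine. The rest of your argument then gives the $q^2+1$ lines: type b) is excluded by $b_1\neq 0$, type a) forces $\mu=1$, and $r_{P_\infty}$ is added.
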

\begin{proof}
From Subsection \ref{subsec:expl} we know we need to study three types of spread lines: $r_{(0,0,0,1)}$, $r_{(0,0,1,t)}$ and $r_{(0,1,\mu,t)}$ where $t,\mu\in \mathbb{F}_{q^2}$.
 We already noticed that $r_{P_{\infty}}=r_{(0,0,0,1)}$ is entirely contained in $\mathcal{B}'$. On the other hand it is an easy computation to show that no spread line of type $r_{(0,0,1,t)}$ is entirely contained in $\mathcal{B}'$.

We consider therefore the generic spread line with equation
\[
r_{(0,1,\mu_0+\epsilon \mu_1,t_0+\epsilon t_1)}=
\begin{cases}
    x_0=0\\
    x_3=\mu_0x_1+\delta \mu_1x_2\\
    x_4=\mu_1x_1+(\mu_0+\mu_1)x_2\\
    x_5=t_0x_1+\delta t_1x_2\\
    x_6=t_1x_1+(t_0+t_1)x_2
\end{cases}.
\]
We substitute $x_3$ and $x_4$ in equation \eqref{eq:even}:
\begin{align*}
[(a_0+a_1)\mu_1^2+(a_1+b_1)(1+\mu_0^2)+\delta(a_1+b_1)\mu_1^2+b_1\mu_0\mu_1]x_1^2
+&\\+[(a_0+a_1)(1+\mu_0^2+\mu_1^2)+(a_1+b_1)\delta^2\mu_1^2+\delta(a_1+b_1)(1+\mu_0^2+\mu_1^2)&\\+\delta b_1\mu_1(\mu_0+\mu_1)]x_2^2+(1+\mu_0^2+\delta\mu_1^2+\mu_0\mu_1)x_1x_2=0
\end{align*}
the above is an identity for every $(x_1,x_2)\in \PG(2,q)$ only if
\begin{equation}
\begin{cases}\label{eqsist}
    &[(a_0+a_1)\mu_1^2+(a_1+b_1)(1+\mu_0^2)+\delta(a_1+b_1)\mu_1^2+b_1\mu_0\mu_1]=0\\
    &[(a_0+a_1)(1+\mu_0^2+\mu_1^2)+(a_1+b_1)\delta^2\mu_1^2+\delta(a_1+b_1)(1+\mu_0^2+\mu_1^2)+\\&+\delta b_1\mu_1(\mu_0+\mu_1)]=0\\
   &(1+\mu_0^2+\delta\mu_1^2+\mu_0\mu_1)=0
    \end{cases}
\end{equation}
the last equation in system \eqref{eqsist} implies that
\begin{equation}\label{eq18}
1+\mu_0^2+\delta\mu_1^2=\mu_0\mu_1
\end{equation}
we substitute this in the first equation of \eqref{eqsist} and obtain
\[
(a_0+a_1)\mu_1^2+a_1\mu_0\mu_1=\mu_1[(a_0+a_1)\mu_1+a_1\mu_0]=0.
\]

We distinguish two cases.

\medskip
\noindent
\textit{Case $\mu_1=0$.} From \eqref{eq18} we obtain $1+\mu_0^2=0$, hence $\mu_0=1$. Therefore we obtain the $q^2$ spread lines of type
\[
r(0,1,1,t_0+\epsilon t_1), \qquad t_0,t_1\in\mathbb{F}_q.
\]

\medskip
\noindent
\textit{Case $\mu_1\neq 0$.} Then
\begin{equation}\label{eq19}
a_1\mu_0+(a_0+a_1)\mu_1=0.
\end{equation}

From \eqref{eq18} we also have
\[
1+\mu_0^2+\mu_1^2=\mu_0\mu_1+(\delta+1)\mu_1^2.
\]
Substituting this into the second equation of  \eqref{eqsist}, after straightforward simplifications in characteristic $2$, we obtain
\[
\bigl(a_0+a_1+\delta a_1\bigr)\mu_0\mu_1+\bigl(a_0(\delta+1)+a_1\bigr)\mu_1^2=0.
\]
Since $\mu_1\neq 0$, dividing by $\mu_1$ gives
\begin{equation}\label{eq20}
\bigl(a_0+a_1+\delta a_1\bigr)\mu_0+\bigl(a_0(\delta+1)+a_1\bigr)\mu_1=0.
\end{equation}

Equations \eqref{eq19} and \eqref{eq20} form a homogeneous linear system in $(\mu_0,\mu_1)$. For a non-trivial solution to exist, its determinant must vanish, that is,
\[
a_1\bigl(a_0(\delta+1)+a_1\bigr)+(a_0+a_1)\bigl(a_0+a_1+\delta a_1\bigr)=0.
\]
In characteristic $2$, this simplifies to
\[
a_0^2+a_0a_1+\delta a_1^2=0.
\]

If $a_1=0$, then $a_0\neq 0$ and the above equation is impossible. If $a_1\neq 0$, dividing by $a_1^2$ yields
\[
\left(\frac{a_0}{a_1}\right)^2+\frac{a_0}{a_1}+\delta=0,
\]
which has no solution in $\mathbb{F}_q$ since $\mathrm{Tr}(\delta)=1$. Hence the case $\mu_1\neq 0$ is impossible.

\end{proof}

As we expected,  property Q3) fails for $B_{a,b}$. We study now the BC representation of $\mathcal{B}_{\infty}=\mathcal{B}_{a,b}\cap \{J=0\}$. From the definition of $\mathcal{B}_{a,b}$ we know that
\[
\mathcal{B}_{a,b}\cap \{J=0\}:
\begin{cases}
J=0\\
X^{2q}+Y^{2q}=0
\end{cases}
=\{J=0;Y= X\}
\]
and hence  the points in $\mathcal{B}_{\infty}$ are the $q^2+1$ points on a line. The spread lines associated to this points are precisely the ones found in Proposition \ref{prop:B'even}.

\begin{theorem}\label{thm:babeven}
    The variety $\mathcal{B}_{a,b}$ in $\PG(3,q^2)$, $q$ even, is completely determined  by the  quadratic cone $\mathcal{B}'$  of  $\PG(6,q)$ with equation
  \[
x_0x_6
+ a_0(x_2^2+x_4^2)
+ a_1(x_1^2+x_2^2+\delta x_2^2+x_3^2+x_4^2+\delta x_4^2)
+ b_1(x_1^2+\delta x_2^2+x_1x_2+x_3^2+\delta x_4^2+x_3x_4)
=0
\]
with base a hyperbolic quadric and vertex $V=(0,0,0,0,0,1,0)$.
    The points at infinity of $\mathcal{B}_{a,b}$ are represented by $q^2+1$ lines of the spread $\mathcal{S}$ all entirely contained in $\mathcal{B}'$.
\end{theorem}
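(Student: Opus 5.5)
The theorem collects the computations of this section, so the proof should assemble three ingredients.

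\emph{Step 1: affine points.} Substitute $x=x_1+\epsilon x_2$, $y=x_3+\epsilon x_4$, $z=x_5+\epsilon x_6$, $a=a_0+\epsilon a_1$, $b=b_0+\epsilon b_1$ into the description \eqref{eq:def_Bab} of $B_{a,b}$. Expand $x^2$ and $x^{q+1}$ using $\epsilon^2=\epsilon+\delta$ and $\epsilon^q+\epsilon=1$, $\epsilon^{q+1}=\delta$. Then impose that the $\epsilon$-coefficient vanishes. This gives \eqref{eq:puntialfini}, and homogenizing with $x_0$ gives the displayed quadric $\mathcal{B}'$. Since $x_6$ appears only in the term $x_0x_6$ and $x_5$ does not appear at all, the point $V=(0,0,0,0,0,1,0)$ is singular. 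Hence $\mathcal{B}'$ is a cone with vertex $V$ over a quadric of $\PG(5,q)$ in the coordinates $x_0,x_1,x_2,x_3,x_4,x_6$. Because $\psi$ is a bijection on affine points, the affine points of $\mathcal{B}_{a,b}$ correspond exactly to those of $\mathcal{B}'$.

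\emph{Step 2: the base is hyperbolic.} The base splits as the hyperbolic line $x_0x_6$ orthogonally summed with two copies of the binary form
\[
(a_1+b_1)x_1^2+b_1x_1x_2+\bigl(a_0+(1+\delta)a_1+\delta b_1\bigr)x_2^2 .
\]
In characteristic $2$ it is cleaner to argue directly than through the determinant formulas of \cite{HT}. The form is nondegenerate because $b_1\neq0$, which holds since $b\notin\mathbb{F}_q$. The orthogonal sum of two isometric nondegenerate binary forms has trivial Arf invariant, whatever type the binary form has. Adding the hyperbolic plane from $x_0x_6$, the base is a non-degenerate quadric with Arf invariant $0$, that is, $Q^+(5,q)$. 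This agrees with the computation $\operatorname{tr}(\alpha)=0$ above.

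\emph{Step 3: points at infinity.} From the homogeneous equation of $\mathcal{B}_{a,b}$, setting $J=0$ leaves $a^q(X^{2q}+Y^{2q})=0$, i.e.\ $(X+Y)^{2q}=0$. So $\mathcal{B}_\infty$ is the line $\{J=0,\,Y=X\}$, consisting of $P_\infty$ and the $q^2$ points $(0,1,1,t)$ with $t\in\mathbb{F}_{q^2}$. Their spread lines are $r_{P_\infty}$ and $r_{(0,1,1,t)}$, which are $q^2+1$ lines.

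\emph{Step 4: containment.} By Proposition~\ref{prop:B'even}, the spread lines contained in $\mathcal{B}'$ are exactly $r_{P_\infty}$ and the $r_{(0,1,1,t)}$. So all $q^2+1$ of them lie in $\mathcal{B}'$. Since the affine part determines $\mathcal{B}_{a,b}$ and the part at infinity is precisely the set of spread lines inside $\mathcal{B}'\cap\{x_0=0\}$, the variety is determined by $\mathcal{B}'$.

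The main obstacle I expect is Step 2, because characteristic $2$ breaks the usual determinant classification. The orthogonal-sum/Arf-invariant argument is the route I would take, checking separately that the polar form restricted to the $x_1x_2$ block is nonsingular.
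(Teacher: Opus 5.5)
Your proposal is correct. Steps 1, 3 and 4 coincide with the paper's argument: the substitution leading to \eqref{eq:puntialfini}, the identification $\mathcal{B}_\infty=\{J=0,\ Y=X\}$, and Proposition~\ref{prop:B'even} for the spread lines. The genuine difference is Step~2.

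The paper classifies the base through \cite[Theorem 1.2]{HT}. It computes $\det(A)$, the skew matrix $B$ and the invariant $\alpha$ as rational expressions over $\mathbb{Z}$, then reduces to characteristic $2$ to obtain $\alpha=0$. This is a mechanical recipe, the same one used in Section~\ref{sec:odd}. You instead read off the orthogonal decomposition $x_0x_6\perp Q\perp Q$ and use additivity of the Arf invariant. Your route is shorter and avoids the lift-to-$\mathbb{Z}$ bookkeeping. It also explains why the outcome cannot depend on $a,b$; in odd characteristic the same decomposition is what makes $-\det(A)$ a square.

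Your argument can be made even more elementary and tied directly to Step~4. Since $Q(x_1,x_2)+Q(x_3,x_4)$ vanishes identically on $x_3=x_1$, $x_4=x_2$, the base contains the plane $\{x_0=0,\ x_3=x_1,\ x_4=x_2\}$. Together with the nondegeneracy coming from $b_1\neq 0$, this forces $Q^+(5,q)$. This plane is the projection from $V$ of the $\mathcal{S}$-solid representing the line $\mathcal{B}_\infty$. So the same observation gives at once that all $q^2+1$ spread lines lie in $\mathcal{B}'$. Proposition~\ref{prop:B'even} is then needed only for the fact that no other spread lines are contained in $\mathcal{B}'$, which is what your final ``completely determined'' sentence uses.
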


\subsection{BC Representation of $\mathcal{M}_{ab}$}\label{ssec:mabeven}
We study at last $\mathcal{F}=\mathcal{M}_{a,b}\cap \{J=0\}$, this is very similar to the study of $\mathcal{F}$ carried in Section \ref{sec:odd} which does not use the fact that $q$ is odd. We have that
$\mathcal{F}$ is the union of $q+1$ lines with equation \ref{eq:linesF}
meeting at $P_{\infty}$.
 The lines associated to each point in $\mathcal{F}$ other than $r_{P_{\infty}}$ have equation
\begin{equation}\label{eq:pointsF}
r_{(0,\alpha,1,t)}=
\begin{cases}
x_0=0\\
x_3=\alpha_0x_1+\delta \alpha_1x_2\\
x_4=\alpha_1x_1+\alpha_0x_2+\alpha_1x_2\\
x_5=t_0x_1+\delta t_1x_2\\
x_6=t_1x_1+t_0x_2+t_1x_2
\end{cases}
\end{equation}
where $\alpha=\alpha_0+\epsilon\alpha_1$, $t=t_0+\epsilon t_1$ and $\alpha_i,t_i\in \mathbb{F}_{q}$.

In conclusion:

\begin{theorem}\label{thm:mabeven}
Let $\mathcal{M}_{a,b}$ be the BM quasi-Hermitian variety in $\PG(3,q^2)$, with $q$ even. 
In $\PG(6,q)$, the affine points of  $\mathcal{M}_{a,b}$ correspond to  the affine points of the quadratic cone $\mathcal{B}'$ defined by
\[
x_0x_6
+ a_0(x_2^2+x_4^2)
+ a_1(x_1^2+x_2^2+\delta x_2^2+x_3^2+x_4^2+\delta x_4^2)
+ b_1(x_1^2+\delta x_2^2+x_1x_2+x_3^2+\delta x_4^2+x_3x_4)
=0.
\]
The cone $\mathcal{B}'$ has vertex 
$V=(0,0,0,0,0,1,0)$,
and its base is a hyperbolic quadric.

The points  at infinity of $\mathcal{M}_{a,b}$ are represented by the $q^3+q^2+1$ lines of the spread $\mathcal{S}$ given in~\eqref{eq:pointsF}. 
Among these lines, $q^2+1$ are contained in $\mathcal{B}'$.

\end{theorem}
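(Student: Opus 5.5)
The plan is to assemble Theorem~\ref{thm:mabeven} from three ingredients: the affine computation already done for $\mathcal{B}_{a,b}$, a count of the spread lines coming from $\mathcal{F}$, and Proposition~\ref{prop:B'even}.

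\textbf{Affine part.} Since $\mathcal{M}_{a,b}$ and $\mathcal{B}_{a,b}$ share the same affine points, the first claim follows at once. The BC image of $B_{a,b}$ consists of the affine points of $\PG(6,q)$ satisfying \eqref{eq:puntialfini}, and homogenising gives the cone $\mathcal{B}'$. For the vertex and the base I would invoke the discussion preceding Theorem~\ref{thm:babeven}. There $\det(A)=-b_1^4\neq 0$, since $b\notin\mathbb{F}_q$ forces $b_1\neq 0$. Also $\alpha$ specialises to $0$, so $\operatorname{tr}(\alpha)=0$, and by \cite[Theorem 1.2]{HT} the base is a non-degenerate hyperbolic quadric. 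The vertex $V=(0,0,0,0,0,1,0)$ is read off because $x_5$ does not appear in the equation.

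\textbf{Points at infinity.} $\mathcal{F}$ is the union of the $q+1$ lines $\ell_\alpha$ of \eqref{eq:linesF}, with $\alpha^{q+1}=1$ in characteristic $2$, and these lines pairwise meet only in $P_\infty$. Hence
\[
|\mathcal{F}|=(q+1)q^2+1=q^3+q^2+1.
\]
Each point of $\Sigma_\infty$ corresponds bijectively to a spread line (Subsection~\ref{subsec:expl}). The points other than $P_\infty$ have coordinates $(0,1,\alpha,t)$, written equivalently up to the normalisation used in \eqref{eq:pointsF}, and substituting into the even-characteristic formula for $r_P$ produces exactly the lines \eqref{eq:pointsF}. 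Together with $r_{P_\infty}$, this gives $q^3+q^2+1$ spread lines.

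\textbf{Lines inside $\mathcal{B}'$.} Proposition~\ref{prop:B'even} already determines every spread line contained in $\mathcal{B}'$: namely $r_{P_\infty}$ and the $q^2$ lines $r_{(0,1,1,t)}$. It remains only to check that all of these belong to the family coming from $\mathcal{F}$. This holds because $\alpha=1$ satisfies $\alpha^{q+1}=1$, so the line $Y=X$ is one of the $\ell_\alpha$. Equivalently, $\mathcal{B}_\infty=\{J=0,\,Y=X\}\subseteq\mathcal{F}$. Hence exactly $q^2+1$ of the $q^3+q^2+1$ lines lie in $\mathcal{B}'$.

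The main obstacle is coordinate bookkeeping rather than anything conceptual. One has to make sure the line parametrisation in \eqref{eq:pointsF} matches the even-case spread formulas, including the extra $\alpha_1x_2$ and $t_1x_2$ terms. One also has to reconcile the normalisation $(0,\alpha,1,t)$ with $(0,1,\alpha,t)$, which is harmless since $\alpha\mapsto\alpha^{-1}$ permutes the solutions of $\alpha^{q+1}=1$. Optionally, I would add the analogue of Proposition~\ref{prop:lines_partition}: these lines partition $\{x_0=0,\ x_1^2+x_1x_2+\delta x_2^2+x_3^2+x_3x_4+\delta x_4^2=0\}$, by the same norm argument. This is not needed for the statement.
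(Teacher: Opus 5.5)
Your proposal is correct and follows the same route as the paper. You reuse the $\mathcal{B}_{a,b}$ computation for the affine cone, its vertex and its hyperbolic base, count the $q^3+q^2+1$ spread lines coming from the $q+1$ lines of $\mathcal{F}$, and invoke Proposition~\ref{prop:B'even} for the $q^2+1$ contained lines. Your explicit checks — that $\{J=0,\,Y=X\}=\ell_1\subseteq\mathcal{F}$ because $1^{q+1}=1$, and that the $(0,\alpha,1,t)$ versus $(0,1,\alpha,t)$ labelling in \eqref{eq:pointsF} is harmless — fill in steps the paper leaves implicit.
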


\section{ BC representation of  BT quasi-Hermitian varieties}\label{sec:BT}
Throughout this section we use the same objects and notation introduced in Subsection \ref{sssec:BT}.

We study the BC representation of $\mathcal{H}^3_{\varepsilon}=(\mathcal{V}^3_{\varepsilon}\setminus \Sigma_{\infty})\cup \mathcal{F}$ starting from its affine point.  We substitute
\[
Z=x_5+\epsilon x_6; X=x_1+\epsilon x_2; Y=x_3+\epsilon x_4;
\]
in equation \eqref{eq:BT1} for $r=3$, obtaining the equation of the algebraic variety:
\begin{equation}\label{eq:coneBT}
\mathcal{C}^3_{\varepsilon}:x_6=x_1^{\sigma+2}+x_1x_2+x_2^\sigma+x_3^{\sigma+2}+x_3x_4+x_4^\sigma
\end{equation}
of $\PG(6,q)$.

\noindent
We determine the points at infinity  of  $\mathcal{C}^3_{\varepsilon}$ in $\PG(6,q)$ that is, the solutions of
\[\begin{cases}
x_0^{\sigma+1}x_6=x_1^{\sigma+2}+x_0^{\sigma}x_1x_2+x_0^2x_2^{\sigma}+x_3^{\sigma+2}+x_0^{\sigma}x_3x_4+x_0^2x_4^\sigma\\
x_0=0
\end{cases}
\]
Hence, we  solve
\[
\left(\frac{x_1}{x_3}\right)^{\sigma+2}=1 \text{ in } \{x_0=0\}.
\]
This depends on the study of $gcd(2^{\frac{e+1}{2}}+2,q-1)$ where, we recall, $x^{\sigma+2}=x^{2^{\frac{e+1}{2}}+2}$ and $q=2^e$. It turns out that the $gcd$ is always $1$, therefore \[\mathcal{C}^3_{\varepsilon}\cap \{x_0=0\}=\{x_0=0,x_1=x_3\}.\]
 The following proposition provides a cone-like description of $\mathcal{C}^3_\varepsilon$. 
\begin{proposition}
Let $\mathcal{C}^3_\varepsilon$ be the hypersurface of $\PG(6,q)$ defined by
\[
x_6 = x_1^{\sigma+2} + x_1x_2 + x_2^\sigma + x_3^{\sigma+2} + x_3x_4 + x_4^\sigma.
\]
Let $V=(0,0,0,0,0,1,0)$ and let $
\mathcal{C}^{\prime 3}_\varepsilon = \mathcal{C}^3_\varepsilon \cap \{x_5=0\}.
$
Then, set-theoretically,
\[
\mathcal{C}^3_\varepsilon = \bigcup_{P \in \mathcal{C}^{\prime 3}_\varepsilon} \langle V,P\rangle,
\]
that is, $\mathcal{C}^3_\varepsilon$ is the union of the lines joining $V$ to the points of $\mathcal{C}^{\prime 3}_\varepsilon$.
\end{proposition}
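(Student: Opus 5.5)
The plan is to use the fact that the defining equation of $\mathcal{C}^3_\varepsilon$ does not involve the coordinate $x_5$. Write the homogeneous form used above,
\[
G(x_0,\dots,x_6)=x_0^{\sigma+1}x_6+x_1^{\sigma+2}+x_0^{\sigma}x_1x_2+x_0^2x_2^{\sigma}+x_3^{\sigma+2}+x_0^{\sigma}x_3x_4+x_0^2x_4^\sigma .
\]
The hypersurface $\mathcal{C}^3_\varepsilon$ is the zero set of $G$ in $\PG(6,q)$, both for its affine points and for its points at infinity as computed before the statement. Since $G$ is independent of $x_5$, we have
\[
G(P+\lambda V)=G(P)
\]
for every vector $P=(p_0,\dots,p_6)$ and every $\lambda\in\mathbb{F}_q$, where $V=(0,0,0,0,0,1,0)$. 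The vertex $V$ itself lies on $\mathcal{C}^3_\varepsilon$, because $G(V)=0$.

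For the inclusion "$\supseteq$", take $P\in\mathcal{C}'^3_\varepsilon$, so $G(P)=0$ and $p_5=0$. Then $P\neq V$, and every point of $\langle V,P\rangle$ is either $V$ or of the form $P+\lambda V$. Both satisfy $G=0$, so the whole line lies in $\mathcal{C}^3_\varepsilon$.

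For the inclusion "$\subseteq$", take $Q=(q_0,\dots,q_6)\in\mathcal{C}^3_\varepsilon$ with $Q\neq V$. Set $P=Q-q_5V=(q_0,q_1,q_2,q_3,q_4,0,q_6)$. This vector is nonzero, since otherwise $Q$ would be a multiple of $V$. We have $G(P)=G(Q)=0$ and $p_5=0$, so $P\in\mathcal{C}'^3_\varepsilon$ and $Q\in\langle V,P\rangle$.

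It remains to place $V$ itself on some line $\langle V,P\rangle$. For this I need $\mathcal{C}'^3_\varepsilon$ to be nonempty. The point $(1,0,0,0,0,0,0)$ works, since $G$ vanishes there. So every point of $\mathcal{C}^3_\varepsilon$ lies on a line $\langle V,P\rangle$ with $P\in\mathcal{C}'^3_\varepsilon$.

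The main subtlety is which object "$\mathcal{C}^3_\varepsilon$" denotes. The statement gives the affine equation, but the projective closure includes the points at infinity $\{x_0=0,\ x_1=x_3\}$ found earlier. I will work throughout with the homogenized equation $G$ and check that it is $x_5$-free; this is immediate. The gcd computation earlier in the paper is then not needed: $\{x_0=0,\ x_1=x_3\}$ is itself a cone with vertex $V$, since it is a subspace containing $V$, and this is consistent with the argument above.

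There is no characteristic-dependent obstacle, because only the independence of $G$ from $x_5$ is used and no quadratic structure is required. This is exactly why the description is "cone-like" even though $G$ is non-quadratic.
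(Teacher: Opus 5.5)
Your proof is correct. It handles the reverse inclusion differently from the paper.

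The paper proves $\bigcup_{P}\langle V,P\rangle\subseteq\mathcal{C}^3_\varepsilon$ from the $x_5$-independence of the equation, exactly as you do. It then gets equality by counting. Each line through $V$ meets $\{x_5=0\}$ in exactly one point, so the union has $q|\mathcal{C}^{\prime 3}_\varepsilon|+1$ points. With $|\mathcal{C}^{\prime 3}_\varepsilon|=q^4+q^3+q^2+q+1$, this equals $|\mathcal{C}^3_\varepsilon|=q^5+q^4+q^3+q^2+q+1$. Both cardinalities rely on the explicit section at infinity $\{x_0=0,\ x_1=x_3\}$, and hence on the computation $\gcd(\sigma+2,q-1)=1$.

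You instead prove the reverse inclusion directly. You project each $Q\neq V$ from $V$ onto $\{x_5=0\}$ via $Q\mapsto Q-q_5V$. You then treat $V$ separately by exhibiting $(1,0,0,0,0,0,0)\in\mathcal{C}^{\prime 3}_\varepsilon$; in the paper this case is hidden in the ``$+1$'' of the count.

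Your argument uses only that the homogenized equation is free of $x_5$. So it works for any such hypersurface over any field, and needs neither point counts nor the gcd computation. The paper's route depends on the finite-field cardinalities, but yields $|\mathcal{C}^3_\varepsilon|$ and $|\mathcal{C}^{\prime 3}_\varepsilon|$ as a by-product.
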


\begin{proof}
Let
\[
U=\bigcup_{P\in \mathcal{C}^{\prime 3}_\varepsilon}\langle V,P\rangle.
\]
Since the defining equation of $\mathcal{C}^3_\varepsilon$ does not depend on $x_5$, every line
\(\langle V,P\rangle\), with \(P\in \mathcal{C}^{\prime 3}_\varepsilon\), is contained in \(\mathcal{C}^3_\varepsilon\). Hence
\[
U\subseteq \mathcal{C}^3_\varepsilon.
\]
Moreover, distinct points $P\in \mathcal{C}^{\prime 3}_\varepsilon$ define distinct lines $\langle V,P\rangle$, and any two such lines meet only in $V$. Hence
\[
|U| = q\,|\mathcal{C}^{\prime 3}_\varepsilon| + 1.
\]
Since $|\mathcal{C}^{\prime 3}_\varepsilon|=q^4+q^3+q^2+q+1$, we obtain
\[
|U| = q^5+q^4+q^3+q^2+q+1 = |\mathcal{C}^3_\varepsilon|.
\]
Therefore $U=\mathcal{C}^3_\varepsilon$.
\end{proof}

\begin{remark}
In contrast with the quadratic case, the hypersurface $\mathcal{C}^3_\varepsilon$ is not a cone in the classical sense. Although it can be described as the union of lines through a fixed point, its singular locus is not a point.
\end{remark}

\begin{proposition}
The only spread lines contained in $\mathcal{C}^3_\varepsilon$ are those of type $r(0,1,1,h)$ with $h \in \mathbb{F}_{q^2}$ and the line $r_{P_{\infty}}$.
\end{proposition}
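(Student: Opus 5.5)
The plan is to work directly with the section at infinity already computed: $\mathcal{C}^3_\varepsilon\cap\{x_0=0\}=\{x_0=0,\ x_1=x_3\}$. This set is a hyperplane of $\Pi_\infty\cong\PG(5,q)$, i.e.\ a $4$-dimensional subspace. Since $x_5,x_6$ are free in it, a spread line lies in $\mathcal{C}^3_\varepsilon$ exactly when all its points satisfy $x_1=x_3$. So the statement reduces to a purely linear question: which lines of the Desarguesian spread $\mathcal{S}$ (in the even-characteristic coordinates of Subsection~\ref{subsec:expl}) lie in the hyperplane $x_1=x_3$ of $\Pi_\infty$?

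I will then run through the three types of spread lines in turn.
\begin{itemize}
\item \textbf{Type c).} For $r_{P_\infty}$ we have $x_1=x_2=x_3=x_4=0$, so $x_1=x_3$ holds trivially and this line is contained.
\item \textbf{Type b).} For $r_{(0,0,1,h)}$ we have $x_1=x_2=0$, while $x_3$ is a free coordinate on the line. Hence $x_1=x_3$ fails at a point with $x_3\neq0$, and no such line is contained.
\item \textbf{Type a).} For $r_{(0,1,k,h)}$ the points are parametrized by $(x_1,x_2)$, with
\[
x_3=k_0x_1+\delta k_1x_2 .
\]
The condition $x_1=x_3$ for all $(x_1,x_2)$ is equivalent to $k_0=1$ and $\delta k_1=0$. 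Since $\operatorname{tr}(\delta)=1$ gives $\delta\neq0$, this means $k_1=0$, i.e.\ $k=1$, while $h$ remains arbitrary.
\end{itemize}
This yields exactly the lines $r_{(0,1,1,h)}$, $h\in\mathbb{F}_{q^2}$, together with $r_{P_\infty}$. These are the $q^2+1$ lines representing the points of the line $\{J=0,\ X=Y\}$ of $\PG(3,q^2)$.

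The one step needing care is the reduction itself, i.e.\ the claim that the points at infinity of $\mathcal{C}^3_\varepsilon$ are exactly $\{x_0=0,\ x_1=x_3\}$. Setting $x_0=0$ in the homogenized equation leaves $x_1^{\sigma+2}=x_3^{\sigma+2}$. To conclude $x_1=x_3$, I will use that $\gcd(2^{\frac{e+1}{2}}+2,\,2^e-1)=1$, which the paper asserts. To justify it: $2^{\frac{e+1}{2}}+2=2\bigl(2^{\frac{e-1}{2}}+1\bigr)$, and $2$ is coprime to $q-1$. Any common prime divisor $p$ of $2^{\frac{e-1}{2}}+1$ and $2^e-1$ has $\operatorname{ord}_p(2)$ dividing $e$ (odd) and dividing $e-1$ but not $\frac{e-1}{2}$. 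Hence that order divides $\gcd(e,e-1)=1$, which is impossible. Thus $x\mapsto x^{\sigma+2}$ is a bijection of $\mathbb{F}_q$.

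Once this is settled, the rest is the routine linear check above. I also need to make sure that the coordinate formulas used are the even-case ones, although only the $x_3$ equation matters, and it has the same form in both cases.
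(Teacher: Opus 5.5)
Your proof is correct, but it handles the type a) lines differently from the paper.

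\textbf{The paper's route.} The paper does not use the linear description $\mathcal{C}^3_\varepsilon\cap\{x_0=0\}=\{x_0=0,\ x_1=x_3\}$ inside the proof. It substitutes $x_3=k_0x_1+\delta k_1x_2$ into the nonlinear equation $x_1^{\sigma+2}+x_3^{\sigma+2}=0$. It then expands $(k_0X+\delta k_1)^{\sigma+2}+X^{\sigma+2}$ as a polynomial in $X=x_1/x_2$ and requires all coefficients to vanish, which gives $k_0^{\sigma+2}=1$ and then $k_1=0$.

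\textbf{Your route.} You linearize first, using that $x\mapsto x^{\sigma+2}$ is a bijection of $\mathbb{F}_q$. The question then becomes which spread lines lie in the hyperplane $x_1=x_3$ of $\Pi_\infty$, and that is a one-line check.

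\textbf{What each buys.} Your route is shorter and needs nothing beyond linear algebra once the section at infinity is known. The paper's coefficient comparison tacitly uses that the polynomial has degree $\sigma+2<q$, so that vanishing on all of $\mathbb{F}_q$ forces the zero polynomial; this holds for $e\ge 3$ but is not stated. Its step $k_0^{\sigma+2}=1\Rightarrow k_0=1$ also rests on the same coprimality fact you use. So staying nonlinear gains nothing here.

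You also prove $\gcd\bigl(2^{\frac{e+1}{2}}+2,\,2^e-1\bigr)=1$, which the paper only asserts. Finally, your viewpoint shows why the answer looks as it does. The contained spread lines are exactly those lying in the $3$-dimensional $\mathcal{S}$-subspace $\{x_0=0,\ x_1=x_3,\ x_2=x_4\}$, the unique $\mathcal{S}$-subspace of that dimension inside the hyperplane $x_1=x_3$. This is the image of the line $\{J=0,\ X=Y\}$.
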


\begin{proof}
We determine which spread lines $r_P$ are entirely contained in $\mathcal{C}^3_\varepsilon$.
It is immediate to check that the line $r_{P_\infty}$ is contained in $\mathcal{C}^3_\varepsilon$. Moreover, for points $P=(0,0,1,k)$, a direct computation shows that the corresponding lines $r_P$ are not contained in $\mathcal{C}^3_\varepsilon$.

We therefore consider spread lines corresponding to points of the form $(0,1,k,h)$, with $k,h \in \mathbb{F}_{q^2}$. Using the notation introduced in Subsection 4.2, such a line has equations
\[
\begin{cases}
x_0 = 0,\\
x_3 = k_0 x_1 + \delta k_1 x_2,\\
x_4 = k_0 x_2 + k_1(x_1 + x_2),\\
x_5 = h_0 x_1 + \delta h_1 x_2,\\
x_6 = h_1 x_1 + (h_0 + h_1)x_2.
\end{cases}
\]
We intersect this line with $\mathcal{C}^3_\varepsilon \cap \{x_0=0\}$, whose equation is
$
x_1^{\sigma+2} + x_3^{\sigma+2} = 0$.
Substituting the expression for $x_3$, we obtain
\[
(k_0 x_1 + \delta k_1 x_2)^{\sigma+2} + x_1^{\sigma+2} = 0.
\]
Assume first that $x_2 \neq 0$ and set $X = x_1/x_2$. Dividing by $x_2^{\sigma+2}$, we obtain
\[
(k_0 X + \delta k_1)^{\sigma+2} + X^{\sigma+2} = 0.
\]
Expanding, this yields a polynomial equation in $X$:
\[
(k_0^{\sigma+2} + 1)X^{\sigma+2}
+ \delta^2 k_0^\sigma k_1^2 X^\sigma
+ \delta^\sigma k_0^2 k_1^\sigma X^2
+ (\delta k_1)^{\sigma+2} = 0. \tag{*}
\]
For the entire line to be contained in $\mathcal{C}^3_\varepsilon$, this equation must hold for all $X \in \mathbb{F}_q$, hence all coefficients must vanish.
From the coefficient of $X^{\sigma+2}$ we obtain
$
k_0^{\sigma+2} + 1 = 0$,
which implies $k_0 = 1$.
Substituting this into the remaining coefficients, we obtain
\[
\delta^2 k_1^2 = 0, \qquad \delta^\sigma k_1^\sigma = 0,
\]
hence $k_1 = 0$. Therefore $k = k_0 + \epsilon k_1 = 1$.

Finally, if $x_2 = 0$, then $x_1 \neq 0$ and the defining equation reduces to
\[
x_1^{\sigma+2} = 0,
\]
which again implies $k_0 = 1$ and $k_1 = 0$.

We conclude that the only spread lines contained in $\mathcal{C}^3_\varepsilon$ are, apart from $r_{P_\infty}$, those of type $r(0,1,1,h)$, as claimed.
\end{proof}

\subsection{BC Representation of $\mathcal{V}^3_{\epsilon} $}

From \cite{AGLS} we have
\[
\mathcal{V}^3_\varepsilon \cap \{J = 0\} =
\begin{cases}
\ell_0 & \text{if } e \equiv 1 \pmod{4},\\
\ell_0 \cup \ell_1 \cup \ell_2 & \text{if } e \equiv 3 \pmod{4},
\end{cases}
\]
where $\ell_0 : J = X + Y = 0$ and $\ell_i : J = c_i X + Y = 0$, for $i=1,2$, with $c_0=1$ and $c_1,c_2$ the $(2^{\frac{e-1}{2}}+1)$-th roots of unity in $\mathbb{F}_{q^2}$.

\medskip
\noindent
\textit{Case $e \equiv 1 \pmod{4}$.}
In this case $\mathcal{V}^3_\varepsilon \cap \{J = 0\}$ is the line $\ell_0$. Under the BC representation, its points correspond to $q^2+1$ spread lines of type
\[
r(0,1,1,h), \qquad h \in \mathbb{F}_{q^2},
\]
which are precisely the spread lines contained in $\mathcal{C}^3_\varepsilon \cap \{x_0=0\}$.

\medskip
\noindent
\textit{Case $e \equiv 3 \pmod{4}$.}
In this case $\mathcal{V}^3_\varepsilon \cap \{J = 0\}$ consists of three lines $\ell_0,\ell_1,\ell_2$. Their BC representation is given by the corresponding spread lines.

The points of $\ell_0$ yield the $q^2+1$ spread lines $r(0,1,1,h)$, while each of the lines $\ell_1$ and $\ell_2$ contributes $q^2$ spread lines of type $r(0,1,c_i,h)$, with $h \in \mathbb{F}_{q^2}$.

Therefore, the BC representation of $\mathcal{V}^3_{\epsilon}\cap \{J=0\}$  consists of $3q^2+1$ spread lines.

\begin{theorem}\label{thm:BT-quasiHermitian-cone}
Let $\mathcal{V}^3_{\varepsilon}$ be the variety in $\PG(3,q^2)$ with equation~\eqref{eq:BT1}.
In the BC representation in $\PG(6,q)$, the affine points of $\mathcal{V}^3_{\varepsilon}$ correspond to the affine points of the algebraic variety
\[
\cC^3_{\varepsilon} :
x_0^{\sigma+1}x_6
= x_1^{\sigma+2}
+ x_0^{\sigma}x_1x_2
+ x_0^2x_2^{\sigma}
+ x_3^{\sigma+2}
+ x_0^{\sigma}x_3x_4
+ x_0^2x_4^{\sigma}.
\]

The BC representation of $\mathcal{V}^3_{\varepsilon}$ is obtained by adjoining to these affine points the spread lines corresponding to the points of $\mathcal{V}^3_{\varepsilon}\cap{J=0}$.

More precisely:
\begin{itemize}
\item if $e \equiv 1 \pmod{4}$, the BC representation 
is completely determined by $\mathcal{C}^3_\varepsilon$, since all spread lines at infinity are contained in it;

\item if $e \equiv 3 \pmod{4}$, it consists of the points of $\mathcal{C}^3_{\varepsilon}$ together with $2q^2$ additional spread lines corresponding to the points of $(\ell_1 \cup \ell_2)\setminus \ell_0$.
\end{itemize}
\end{theorem}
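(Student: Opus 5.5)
The plan is to assemble the theorem from three ingredients established just above: the coordinate translation giving equation \eqref{eq:coneBT}, the description of $\mathcal{V}^3_\varepsilon\cap\{J=0\}$ quoted from \cite{AGLS}, and the proposition identifying the spread lines contained in $\mathcal{C}^3_\varepsilon$.

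\emph{Step 1: the affine part.} Write $X=x_1+\epsilon x_2$, $Y=x_3+\epsilon x_4$, $Z=x_5+\epsilon x_6$ in \eqref{eq:BT1} with $r=3$. Then expand $\Gamma_\varepsilon(X)$ and $\Gamma_\varepsilon(Y)$ in the basis $(1,\epsilon)$, using $\epsilon^2=\epsilon+\delta$ and $\epsilon^q=\epsilon+1$.

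The left side $Z^q+Z$ equals $x_6$, since it is the trace of $Z$ times the appropriate normalisation. So the condition is that $\Gamma_\varepsilon(X)+\Gamma_\varepsilon(Y)$ equals $x_6$. Because each $\Gamma_\varepsilon(x)$ is $\mathbb{F}_q$-valued, this yields exactly \eqref{eq:coneBT}.

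Recording the computation cleanly is the main obstacle. One must check that $x+(x^q+x)\varepsilon$ reduces to $x_1$, that $(x^q+x)^\sigma=x_2^\sigma$, and that the remaining terms $(x^{2q}+x^2)\varepsilon+x^{q+1}+x^2$ collapse to $x_1x_2$ in $\mathbb{F}_q$. I would verify these identities separately for a single variable and then add the two copies.

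Since $\psi$ is a bijection on affine points, the affine points of $\mathcal{V}^3_\varepsilon$ correspond exactly to the affine points of the hypersurface. Homogenising with $x_0$ to degree $\sigma+2$ gives the displayed equation of $\mathcal{C}^3_\varepsilon$.

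\emph{Step 2: points at infinity.} By the definition of the BC representation (the model $\Pi_3$), a point of $\Sigma_\infty$ is represented by its spread line. So the BC image of $\mathcal{V}^3_\varepsilon$ is the set of affine points from Step 1 together with $\bigcup_{R\in\mathcal{V}^3_\varepsilon\cap\{J=0\}} r_R$.

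By \cite{AGLS}, this intersection is $\ell_0$ when $e\equiv1\pmod 4$, and $\ell_0\cup\ell_1\cup\ell_2$ when $e\equiv 3\pmod 4$.

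\emph{Step 3: comparison with $\mathcal{C}^3_\varepsilon$.} The points of $\ell_0$ are $P_\infty$ and the points $(0,1,1,h)$. By the preceding proposition, the lines $r_{P_\infty}$ and $r_{(0,1,1,h)}$ are exactly the spread lines contained in $\mathcal{C}^3_\varepsilon$.

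Moreover, $\mathcal{C}^3_\varepsilon\cap\{x_0=0\}=\{x_0=0,\,x_1=x_3\}$. This set is precisely the union of these $q^2+1$ lines: each $r_{(0,1,1,h)}$ satisfies $x_3=x_1$, and the union has the right number of points. Hence for $e\equiv1\pmod4$ the BC representation is exactly $\mathcal{C}^3_\varepsilon$.

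For $e\equiv3\pmod4$, the lines $\ell_1$ and $\ell_2$ meet $\ell_0$ only in $P_\infty$. Their remaining $2q^2$ points $(0,1,c_i,h)$ have $c_i\neq1$, so by the same proposition their spread lines are not contained in $\mathcal{C}^3_\varepsilon$. They must therefore be adjoined as extra lines, which gives the second case of the theorem.
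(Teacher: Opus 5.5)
Your Steps 1 and 2 follow the paper's route and are correct. Indeed $\Gamma_\varepsilon(x_1+\epsilon x_2)=x_1^{\sigma+2}+x_1x_2+x_2^{\sigma}$, and $Z^q+Z=x_6$ because $\epsilon^q+\epsilon=1$. The containment part of Step 3 is also correct.

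\textbf{The gap.} You claim that $\mathcal{C}^3_\varepsilon\cap\{x_0=0\}=\{x_0=0,\,x_1=x_3\}$ is \emph{precisely} the union of the $q^2+1$ spread lines of $\ell_0$. This is false, and the count you invoke actually refutes it:
\begin{itemize}
\item $\{x_0=0,x_1=x_3\}$ is a $\PG(4,q)$ with $q^4+q^3+q^2+q+1$ points.
\item The $q^2+1$ pairwise disjoint lines $r_{P_\infty}$ and $r_{(0,1,1,h)}$ cover only $(q^2+1)(q+1)=q^3+q^2+q+1$ points. These form the solid $\{x_0=0,\,x_1=x_3,\,x_2=x_4\}$, the $\mathcal{S}$-subspace representing $\ell_0$.
\item Every other spread line is not contained in this $\PG(4,q)$. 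Being a line of the $\PG(5,q)$ $\{x_0=0\}$, it meets it in exactly one point.
\end{itemize}
So $\mathcal{C}^3_\varepsilon$ has $q^4$ further points at infinity, namely those with $x_2\neq x_4$, which do not belong to the BC image of $\mathcal{V}^3_\varepsilon$. Hence your conclusion ``for $e\equiv 1\pmod 4$ the BC representation is exactly $\mathcal{C}^3_\varepsilon$'' is wrong.

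\textbf{The repair.} Drop that equality and the point count. The first bullet of the theorem only needs every spread line at infinity to lie in $\mathcal{C}^3_\varepsilon$, which you already have from the proposition on spread lines. ``Completely determined by $\mathcal{C}^3_\varepsilon$'' then means: the BC image is the affine part of $\mathcal{C}^3_\varepsilon$ together with the spread lines contained in $\mathcal{C}^3_\varepsilon$. This is what the paper argues; it never asserts set equality with $\mathcal{C}^3_\varepsilon$.

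The same caution applies to the case $e\equiv 3\pmod 4$. There, ``the points of $\mathcal{C}^3_\varepsilon$'' must be read as its affine points plus the $q^2+1$ lines of $\ell_0$. The $2q^2$ additional lines $r_{(0,1,c_i,h)}$ each meet $\mathcal{C}^3_\varepsilon\cap\{x_0=0\}$ in exactly one point. The remaining $q^4-2q^2$ points of $\mathcal{C}^3_\varepsilon$ at infinity are still not in the image.
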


\subsection{BC Representation of $\mathcal{H}^3_{\varepsilon}$}

By definition  of the BT quasi-Hermitian variety, $\mathcal{H}^3_{\varepsilon}\cap \{J=0\}=\mathcal{F}$ where, we recall, $\mathcal{F}=\{(0,X_1,\dots,X_r)\mid X_1^{q+1}+\dots+X_{r-1}^{q+1}=0\}$. We know that $\mathcal{F}\subset \PG(3,q^2)$ is the union of $q+1$ lines \[\ell_{\alpha}=\{(0,X,Y,Z)\in \PG(3,q^2)|Y=\alpha X\}\] where $\alpha$ ranges among the solutions of $x^{q+1}=-1$ in $\mathbb{F}_{q}$. The $q+1$ lines $\ell_{\alpha}$ meet at $P_{\infty}$, its resulting BC representation is given by $q^3+q^2+1$ spread lines of type $r_{P_{\infty}}$ and type $r_{(0,1,\alpha,h)}$, $h\in \mathbb{F}_{q}$.

We conclude the following about the BC representation of the $BT$ quasi Hermitian variety:

\begin{theorem}\label{thm:BTH}
In the BC representation in $\PG(6,q)$, the affine points of the BT quasi-Hermitian variety $\mathcal{H}^3_{\varepsilon}$ correspond to the affine points of the hypersurface $\mathcal{C}^3_{\varepsilon}$ of equation
\[
\mathcal{C}^3_{\varepsilon} :
x_0^{\sigma+1}x_6
= x_1^{\sigma+2}
+ x_0^{\sigma}x_1x_2
+ x_0^2x_2^{\sigma}
+ x_3^{\sigma+2}
+ x_0^{\sigma}x_3x_4
+ x_0^2x_4^{\sigma}.
\]
Equivalently, they correspond to the affine points of the union of the lines joining $V=(0,0,0,0,0,1,0)$ to the points of $\mathcal{C'}^3_{\varepsilon}=\mathcal{C}^3_{\varepsilon}\cap \{x_5=0\}$.

The points at infinity of $\mathcal{H}^3_{\varepsilon}$ are represented by
$q^3+q^2+1$ lines of the spread $\mathcal{S}$ given in~\eqref{eq:pointsF}.
Among these lines, exactly $q^2+1$ are contained in $\cC^3_{\varepsilon}$.
\end{theorem}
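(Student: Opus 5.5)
The plan is to assemble Theorem~\ref{thm:BTH} from three pieces: the affine computation, the structure of $\mathcal{F}$, and the spread-line count already obtained for $\mathcal{C}^3_\varepsilon$.

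\emph{Affine part.} By definition $\mathcal{H}^3_\varepsilon\setminus\Sigma_\infty=\mathcal{V}^3_\varepsilon\setminus\Sigma_\infty$. Substituting $X=x_1+\epsilon x_2$, $Y=x_3+\epsilon x_4$, $Z=x_5+\epsilon x_6$ into \eqref{eq:BT1} gives \eqref{eq:coneBT}, exactly as in Theorem~\ref{thm:BT-quasiHermitian-cone}. Here one uses that $z^q+z=x_6(\epsilon^q+\epsilon)=x_6$, and that $\Gamma_\varepsilon(x)$ lies in $\mathbb{F}_q$ and equals the $\mathbb{F}_q$-expression shown. This holds because $x^q+x=x_2$, $x+(x^q+x)\varepsilon=x_1$, and $x^{q+1}+x^2$ combines with $(x^{2q}+x^2)\varepsilon$ to give $x_1x_2$. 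Homogenizing with $x_0$ gives the stated equation. The cone-like description then follows directly from the preceding Proposition: $\mathcal{C}^3_\varepsilon$ is the union of the lines $\langle V,P\rangle$ with $P\in\mathcal{C}'^3_\varepsilon$. Restricting to $x_0\neq 0$ gives the equivalent description of the affine points.

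\emph{Points at infinity.} $\mathcal{F}$ is the union of the $q+1$ lines $\ell_\alpha$ with $\alpha^{q+1}=-1=1$, all through $P_\infty$. It therefore has $(q+1)q^2+1=q^3+q^2+1$ points. Each point corresponds to exactly one spread line: either $r_{P_\infty}$, or $r_{(0,1,\alpha,h)}$ as in \eqref{eq:pointsF} (even-characteristic coordinates from Subsection~\ref{subsec:expl}). Distinct points give distinct spread lines because $\mathcal{S}$ is a spread.

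\emph{Counting the lines contained in $\mathcal{C}^3_\varepsilon$.} By the previous Proposition, the spread lines inside $\mathcal{C}^3_\varepsilon$ are exactly $r_{P_\infty}$ and the $q^2$ lines $r_{(0,1,1,h)}$. Since $1^{q+1}=1$, the value $\alpha=1$ is among the roots, so $\ell_1=\ell_0\subset\mathcal{F}$. Hence all $q^2+1$ of these lines belong to the family representing $\mathcal{F}$. Every other $\alpha\neq1$ gives lines not contained in $\mathcal{C}^3_\varepsilon$, again by that Proposition. This yields exactly $q^2+1$.

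The main obstacle I expect is the affine identity: checking carefully, in characteristic $2$ with $\varepsilon^2+\varepsilon=\delta$, that the $\epsilon$-component of $\Gamma_\varepsilon(x_1+\epsilon x_2)$ vanishes and that its $\mathbb{F}_q$-part is exactly $x_1^{\sigma+2}+x_1x_2+x_2^\sigma$. I would do this by writing $x^q=x_1+\epsilon^q x_2=x_1+x_2+\epsilon x_2$, and using that $\sigma$ fixes $\mathbb{F}_q$ elementwise only as an exponent map, so $(x_2)^\sigma=x_2^\sigma$ poses no difficulty. The remaining steps are bookkeeping with results already established.
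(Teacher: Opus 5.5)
Your proposal is correct and follows the paper's own route. The paper's argument has the same pieces: substitute into \eqref{eq:BT1} to get the affine equation, invoke the cone-like Proposition, count $(q+1)q^2+1$ spread lines for $\mathcal{F}$, and use the Proposition identifying the spread lines inside $\mathcal{C}^3_\varepsilon$, noting that $\alpha=1$ satisfies $\alpha^{q+1}=1$. You verify the affine substitution more carefully than the paper, correctly using $\epsilon=\varepsilon$ and $\epsilon^q=\epsilon+1$. One harmless slip: $\sigma$ does not fix $\mathbb{F}_q$ elementwise; it only maps $\mathbb{F}_q$ onto itself, which is all your computation needs.
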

\textbf{Remark.} By the results above, in the Barlotti--Cofman representation in $\PG(6,q)$, the cone associated with the algebraic varieties under consideration does not, in general, encode all the information coming from the points at infinity.

More precisely, for both the varieties $\mathcal{B}_{a,b}$ and $\mathcal{V}_{\varepsilon}$, the lines of the spread corresponding to the points at infinity are not, in general, all contained in the associated cone, and their behaviour depends on the type of variety and on arithmetic conditions on $q$.
In particular, the cone alone does not determine the full Barlotti--Cofman representation, and the configuration of spread lines at infinity has to be regarded as an additional geometric invariant. This highlights the intrinsic twofold nature of the representation, where both the affine and the infinite components play a fundamental role.
\section{Open Problems}\label{sec:open}

\begin{itemize}

\item \textbf{BC characterization in dimension 3.}
Determine whether, for \(r=3\), the isomorphism class (projective
equivalence class) of a BM or BT quasi-Hermitian variety is
uniquely determined by the pair formed by its BC affine cone in
\(\PG(6,q)\) and the configuration of spread elements in the
section at infinity. In other words: does the BC cone plus its
infinite spread-data determine the variety up to projectivity?

\item \textbf{Extension to higher dimension.}
Extend the explicit BC constructions obtained here for
\(\mathrm{PG}(3,q^2)\) to \(\mathrm{PG}(r,q^2)\) with \(r\ge 4\).
Which features of the \(r=3\) picture (e.g. quadratic vs
non-quadratic nature of the affine cone, arithmetic dependence of
the number of spread elements contained at infinity) persist in
higher dimension, and which do not?

\item \textbf{Invariants from spread-configuration.}
Investigate whether numerical and configurational data of the spread
elements contained in the section at infinity (e.g. number of spread
lines, incidence graph, orbit decomposition under stabilizers) yield
projective invariants that distinguish inequivalent quasi-Hermitian
varieties for \(r=3\), and then for general \(r\).

\item\textbf{Automorphism groups.}
Complete the determination of the full automorphism group of BM  quasi-Hermitian varieties for $r=3$ and $q$ odd and for $r\geq 4$ and arbitrary $q$.
Complete the determination of the full automorphism group of BT quasi-Hermitian varieties in \(\mathrm{PG}(r,q^2)\) for all
\(r\ge 4\) and  \(q\) even. In parallel, study how the automorphism
groups act on the corresponding BC cones and on the spread-configuration at infinity.

\end{itemize}

\vspace{0.5cm}
\noindent
\begin{minipage}{0.48\textwidth}
Angela Aguglia\\
Dipartimento di Meccanica, Matematica e Management,\\
Politecnico di Bari,\\
Via Orabona, 4-70125 Bari, Italy\\
\texttt{angela.aguglia@poliba.it}\
\end{minipage}
\hfill
\begin{minipage}{0.48\textwidth}
Viola Siconolfi\\
Dipartimento di Meccanica, Matematica e Management,\\
Politecnico di Bari,\\
Via Orabona, 4-70125 Bari, Italy\\
\texttt{viola.siconolfi@poliba.it}\
\end{minipage}


\begin{thebibliography}{999}

\bibitem{AA}
{\bf A.~Aguglia.}
{\em Quasi-Hermitian varieties in PG(r, q$^2$), q even},
Contributions to Discrete Mathematics, 8(1):31--37, 2013.

\bibitem{ACK}
{\bf A.~Aguglia, A.~Cossidente, G.~Korchm\'aros.}
{\em On quasi-Hermitian varieties}, 
Journal of Combinatorial Designs, 20(10):433--447, 2012.

\bibitem{AG}
{\bf A.~Aguglia, L.~Giuzzi.}
{\em On the equivalence of certain quasi-Hermitian varieties},
Journal of Combinatorial Designs, pp.~1--15, 2022.


\bibitem{AGLS}
{\bf A.~Aguglia, G.~Giuzzi, G.~Longobardi, V.~Siconolfi.}
{Minimal codes from hypersurfaces in even characteristic}, arXiv 2502.02278, 2025

\bibitem{AGMS}
{\bf A.~Aguglia, G.~Giuzzi, A.~Montinaro ,V.~Siconolfi.}
{On Quasi‐Hermitian Varieties in Even Characteristic and
Related Orthogonal Arrays},Journal of Combinatorial Designs, Volume 33,  109-122, 2025.

\bibitem{AM}
{\bf A.~Aguglia, A.~Montinaro.}
{On a class of quasi-Hermitian surfaces in even characteristic}, arXiv:2508.03907,2025.


\bibitem{BE}
{\bf R.D.~Baker, G.L.~Ebert}
{On Buekenhout-Metz unitals of odd order}, Journal of Combinatorial Theory, Series A, Volume 60, Issue 1, 67-84, 1992.

\bibitem{BarE}
{\bf S.~Barwick, G.~Ebert}
{Unitals in Projective Planes}, Springer Monographs in Mathematics, Springer New York, NY, 2008.


\bibitem{CK}
{\bf R.~Calderbank, W.~M.~Kantor.}
{\em The geometry of two-weight codes},
Bull. London Math. Soc., 18(2):97--122, 1986.


\bibitem{DHOP}
{\bf F.~De Clerck, N.~Hamilton, C.~O'Keefe, T.~Penttila.}
{\em Quasi-quadrics and related structures},
Australasian J. Combin., 22:151--166, 2000.

\bibitem{D}
{\bf P.~Delsarte.}
{\em Weights of linear codes and strongly regular normed spaces},
Discr. Math., Volume 23, Issue 5,  407-438, 1973.

\bibitem{DS}
{\bf S.~De Winter, J.~Schillewaert.}
{\em A note on quasi-Hermitian varieties and singular quasiquadrics},
Bull. Belg. Math. Soc. Simon Stevin, 17(5):911--918, 2010.

\bibitem{E2} {\bf G. L. Ebert},
   {\em Incidence structures arising from Hermitian curves},
  Ars Combinatoria,
  vol. 18, 161--175,
  1984.


\bibitem{E}
{\bf G. L.~Ebert.}
{\em On Buekenhout-Metz unitals of even order},
Europ. Journ. of Comb.,Volume 13, Issue 2, 109--117, 1992.

\bibitem{JH}
{\bf J.W.P. Hirschfeld},
Projective Geometries over Finite Fields,2nd  edition, Clarendon Press,1998.


\bibitem{HT}
{\bf J.~W.~P. Hirschfeld and J.~A. Thas.}
{General Galois Geometries},
Springer Monographs in Mathematics, Springer-Verlag London, 2016.

\bibitem{HKT} {\bf J. W. P. Hirschfeld, G. Korchm{\'a}ros, F. Torres.}, Algebraic Curves over a Finite Field, Princeton University Press, 2008.


\bibitem{LLP}
{\bf M.~Lavrauw, S.~Lia, F.~Pavese.}
{\em On the geometry of the Hermitian Veronese curve and its quasi-Hermitian surfaces},
Discrete Math., 346(10):113582, 2023.

\bibitem{LS}
{\bf  S.~Lia, J.~Sheekey.}
{\em On the geometry of tensor products over finite fields},
arXiv:2311.17896, 2023.

\bibitem{MP}{\bf G.~Marino, O.~Polverino.} Ovoidal blocking sets and maximal partial ovoids
 of Hermitian varieties, \emph{Des. Codes Cryptogr.} \textbf{56} (2010), 115--130.


\bibitem{PF}
{\bf F.~Pavese.}
{\em Geometric constructions of two-character sets},
Discrete Math., 338(3):202--208, 2015.

\bibitem{SchillewaertVandeVoorde2022}
{\bf J.~Schillewaert, G.~Van de Voorde.}
{\em Quasi-polar spaces},
Comb. Theory, 2(3):Paper No. 5, 32 pp., 2022.

\bibitem{Segre1965}
{\bf B.~Segre.}
{\em Forme e geometrie hermitiane, con particolare riguardo al caso finito},
Ann. Mat. Pura Appl. (4), 70:1--201, 1965.







\end{thebibliography}
\end{document}